\title{Expected multi-utility representations of preferences over lotteries}
   \def\MR#1{}
\newtheorem{thm}{Theorem}[section]
\newtheorem{cor}[thm]{Corollary}%[section]
\newtheorem{lem}[thm]{Lemma}
\theoremstyle{definition} 
\newtheorem{defi}[thm]{Definition}%[section]
\let\olddefi\defi
\renewcommand{\defi}{\olddefi\normalfont}
\let\oldquestion\question
\renewcommand{\question}{\oldquestion\normalfont}
\let\oldexample\example
\renewcommand{\example}{\oldexample\normalfont}
\newtheorem{rmk}[thm]{Remark}
\let\oldrmk\rmk
\renewcommand{\rmk}{\oldrmk\normalfont}
\newtheorem{claim}{\textsc{Claim}}
\providecommand{\MR}[1]{}
\providecommand{\MR}{\relax\ifhmode\unskip\space\fi MR }
\providecommand{\href}[2]{#2}
\subjclass{C79, D11} 
\begin{document}

\author[P.~Leonetti]{Paolo Leonetti}
\address[P.~Leonetti]
%{Department of Economics, Universit\`a degli Studi dell'Insubria, via Monte Generoso 71, Varese 21100, Italy}
{Department of Decision Sciences, Universit\`a Luigi Bocconi, via Roentgen 1, Milan 20136, Italy}
\email{leonetti.paolo@gmail.com}
\urladdr{\url{https://sites.google.com/site/leonettipaolo/}} 

\keywords{Preferences over lotteries; closed convex cones; finitely-supported probability measures; duality pairs; expected multi-utility representation.}

\begin{abstract} 
\noindent 
Let $\succsim$ be a binary relation on the set of simple lotteries over a countable outcome set $Z$. We provide necessary and sufficient conditions on $\succsim$ to guarantee the existence of a set $U$ of von Neumann--Morgenstern utility functions $u: Z\to \mathbf{R}$ such that 
$$
p\succsim q 
\,\,\,\Longleftrightarrow\,\,\,
\mathbf{E}_p[u] \ge \mathbf{E}_q[u]
\,\text{ for all }u \in U
$$
for all simple lotteries $p,q$. 
In such case, the set $U$ is essentially unique. Then, we show that the analogue characterization does not hold if $Z$ is uncountable. 
This provides an answer to an open question posed by Dubra, Maccheroni, and Ok in 
[J. Econom. Theory~\textbf{115} (2004), no.~1, 118--133]. 
Lastly, we show that different continuity requirements on $\succsim$ allow for certain restrictions on the possible choices of the set $U$ of utility functions (e.g., all utility functions are bounded), providing a wide family of expected multi-utility representations. 
\end{abstract}
\maketitle
\thispagestyle{empty}

\section{Introduction}\label{sec:int}

Let $\succsim$ be a binary relation on a topological space $X$. 
The classical Debreu's utility representation theorem provides sufficient conditions on $\succsim$ and $X$ for the existence of a continuous utility function $u:X\to \mathbf{R}$ which represents $\succsim$, so that 
$$
x\succsim y 
\quad \Longleftrightarrow \quad 
%\forall u \in U, \,\,
u(x) \ge u(y),
$$
for all $x,y \in X$. In such case, $\succsim$ is necessarily complete and transitive. 

However, since the seminal works of Aumann \cite{MR174381} and Bewley \cite{MR1954892}, it is known that completeness may not be considered a basic trait of rationality of the decision maker. 
A long line of research studies this topic, 
see e.g. 
\cite{MR2956103, 
Dubra_et_al, 
MR2860226, 
MR3566440, 
MR1910069} and references therein. 
The main objective of this paper is to provide continuous multi-utility representation theorems for preference relations over lotteries which may fail to be complete. 
Thus, the basic question can be formulated as follows: Given binary relation $\succsim$ on a sufficiently well-behaved topological space $X$, we search for necessary and sufficient conditions on $\succsim$ for which there exists a collection $U$ of continuous utility functions $u: X \to \mathbf{R}$ such that 
$$%\begin{equation}\label{eq:claimgeneraltopological}
x\succsim y 
\quad \Longleftrightarrow \quad 
\forall u \in U, \,\,
u(x) \ge u(y),
$$%\end{equation}
for all $x,y \in X$. 
Note that, in such case, the binary relation is necessarily reflexive and transitive, hence a preorder. 
In addition, 
we may interpret $\succsim$ as resulting from the unanimity of a family of complete preference relations, each one admitting a classical Debreu's continuous utility representation. 

In many instances, the underlying space $X$ has also the structure of (convex subset of a) vector space, so that it makes sense to require each continuous utility function $u$ to be, in addition, linear. 
To make an example, in the important work of Dubra, Maccheroni and Ok \cite{Dubra_et_al}, the authors provide necessary and sufficient conditions on a binary relation $\succsim$ over the set $\mathscr{P}$ of Borel probability measures on a compact metric space to satisfy the representation  
$$
p\succsim q 
\quad \Longleftrightarrow \quad 
\forall u \in U, \,\,
\mathbf{E}_p[u] \ge \mathbf{E}_q[u],
%\int u\,\mathrm{d}p \ge \int u\,\mathrm{d}q,
$$
for all $p,q \in \mathscr{P}$. Moreover, the set $U$ is essentially unique, in a precise sense. 
The interpretation of such representation is that the decision maker can show a lack of confidence in the evaluation of lotteries because he is unsure about his future tastes/risk attitudes, each represented by an utility function in $U$.

In the same work, the authors pose as open question in \cite[Remark 1]{Dubra_et_al} whether the analogue characterization holds replacing $\mathscr{P}$ with the set of simple lotteries on a given outcome set $Z$. Hereafter, a simple lottery (or, simply, a lottery) stands for a finitely-supported probability measure on $Z$. 
\emph{The main result of this work is that the answer is affirmative if and only if $Z$ is finite or countably infinite}. 
See Theorem \ref{thm:mainweaktopology} and Theorem \ref{thm:mainnegativecharacterization} below for the positive and negative case, respectively. Details follow in Section \ref{sec:intro}. 

Our methods of proofs will be sufficiently strong to provide a wide family of expected multi-utility representations. Indeed, we show that different continuity requirements on $\succsim$ allow for certain restrictions on the possible choices of the utility functions (e.g., all utility functions are bounded, or countably supported). Indeed, it is known that different continuity properties in an infinite dimensional framework may correspond to different behavioral implications, see e.g. \cite{Epstein}.

Lastly, removing also the transitivity and continuity requirements, we provide a representation for reflexive binary relations $\succsim$ which satisfy the independence axiom over lotteries on arbitrary outcome sets $Z$, in the same spirit of the coalitional representation given by Hara, Ok, and Riella in \cite[Theorem 1]{MR3957335}. 

\subsection{Motivations} 
Before we provide the details of our contribution, we remark that there are several economic reasons why one should be interested in the study of incomplete preference relations. Indeed, many authors strongly suggest, from both the normative and positive viewpoints, that preference relations should allow for potential indecisiveness of the decision makers, see e.g. \cite{MR174381, MR1954892}. Accordingly, within the revealed preference paradigm, it has been recently shown that incompleteness of preferences may arise due to status quo bias and the endowment effect, see e.g. \cite{EliazOK, MasOK}.
As an application, the resulting theory in \cite{EliazOK} has been able to cope with the classical preference reversal phenomenon.

In the case of our incomplete preferences, there are economic examples in which a decision maker is in fact composed of several agents, each one with a given utility function. For instance, in a social choice framework such as environmental policy, healthcare, or public policy, one commonly uses the Pareto dominance to rank alternatives (cf. First and Second Welfare Theorem), and such criterion is, of course, incomplete. As another real-world example, when purchasing a car, one needs to consider factors like price, fuel efficiency, safety, and style; here, the decision maker has to 
% express and 
 compare preferences over these multiple criteria.

As anticipated before, we are going to study how to handle the problem of actually representing such preferences with expected multi-utility models as in \cite{Dubra_et_al}. It is worth to remark that the very same idea goes back at least to von Neumann and Morgenstern, see \cite[pp. 19--20]{vNM}, although without details. 
In this work, in particular, we focus on preferences over simple lotteries (i.e., probability distributions with finite support) on a given outcome set. There are many practical motivations which justify this choice (apart from answering an open question in \cite{Dubra_et_al}), and we list below several ones.

First, simple lotteries provide a way to model and analyze situations involving uncertainty and risk, cf. e.g. \cite{Bila, Drapeau}. Accordingly, they can be considered as the building blocks of utility theory, which is a cornerstone of microeconomics and rational decision making. As remarked by Evren in \cite{MR2398816}, simple lotteries 
can also be interpreted, in a game theory framework, as strategic decisions where players and the nature are confined to play simple strategies.

Second, in various fields such as finance, engineering, and operations research, problems involve decision variables that are inherently finite. Preferences over probability distributions on these variables are essential, e.g., for optimation problems. 
Simple lotteries are also employed in portfolio theory to assess the risk and return profiles of different investment portfolios. Diversification strategies, aimed at reducing risk, rely on preferences over (asset) lotteries. 
Similarly, 
in policy analysis and resource allocation problems, decision makers often have to make choices that affect finite populations or groups. Representations of these preferences help assess the impact of different policies on such finite groups. As a last example, in surveys or experiments, responses often take on discrete values, and the resulting probability distributions are finitely-supported. In this sense, studying preferences on the latter distributions allows decision makers to directly deal with the empirical data.

Third, simple lotteries are often easier to work with and understand than continuous ones. This simplicity can make it more feasible for individuals to express and communicate their preferences over such distributions. In addition, if preferences are not fully known or cannot be easily elicited, the study of incomplete preferences on probability measures offers a practical approach. Indeed, decision makers can express partial preferences over some aspects of a decision problem, even when they are unsure about others.

Lastly, 
%as suggested by a referee, 
even if representations of such incomplete preferences may seem rather technical, recent studies have shown that they are useful tools to understanding certain classes of \emph{complete} preferences. For example, the cautious expected utility model by Cerreia-Vioglio, Dillenberger, and Ortoleva \cite{Cerreia09} can be derived by applying the expected multi-utility representation of Dubra, Maccheroni, and Ok \cite{Dubra_et_al}. A similar remark holds for multi-prior expected multi-utility representations 
%of incomplete preferences 
due to Galaabaatar and Karni \cite{Galaa}.
% This is going to be, hopefully, the direction of a future work.}

% representation of imcomplete preferences over simple lotteries may have }

%%%%%%%%%%%%%%%%%%%%%%%%%%%%%%%%%%%%%%%%%%%%%%%%%%%%%%%%%%%%%%%%%%%%%%%%%%%%
%
\section{Preliminaries and Main results}\label{sec:intro}

Let $\Delta$ be the set of finitely-supported probability measures on a given nonempty set $Z$, which is interpreted as the set of 
\emph{lotteries} over the set of possible outcomes $Z$, see e.g. \cite{Cerreia09, 
MR871146, Gilboa88, MR1106509, MR1906933}. 
A \emph{utility function} over $Z$ is a real-valued map $u: Z \to \mathbf{R}$. 
Note that $\Delta$ is a convex subset of the vector space of finitely-supported functions $p: Z\to \mathbf{R}$, hereafter denoted by $\mathbf{R}_0^Z$. 
Hence, a lottery is a nonnegative function $p \in \mathbf{R}_0^Z$ such that 
$\sum\nolimits_{z \in Z}p(z)=1$. 

\begin{defi}\label{def:preference}
Let $Z$ be a nonempty set. A binary relation $\succsim$ on $\Delta$ satisfies the 
\emph{independence axiom} if  
\begin{equation}\label{eq:indepednetdef}
%\forall p,q,r \in \Delta, \forall \alpha \in (0,1) 
%\quad 
p\succsim q \,\,\,
\Longleftrightarrow \,\,\,
\alpha p+(1-\alpha)r \succsim \alpha q+(1-\alpha)r
\end{equation}
for all lotteries $p,q,r \in \Delta$ and scalars $\alpha \in (0,1)$. 
\end{defi}

Hereafter, 
for each $p \in \mathbf{R}_0^Z$ and utility function $u: Z\to \mathbf{R}$, define
$$
\mathbf{E}_p[u]:=\sum\nolimits_{z \in Z}u(z)p(z).
$$
As anticipated in Section \ref{sec:int}, our main objective is to characterize binary relations $\succsim$ over lotteries through expected multi-utility representations in the sense of Dubra, Maccheroni, and Ok \cite{Dubra_et_al}, that is, to search for necessary and/or sufficient conditions on $\succsim$ for which there exists a set $U$ of utility functions (possibly, of a certain type) such that%, %for all lotteries $p,q \in \Delta$, 
\begin{equation}\label{eq:claimedmultiutility}
p\succsim q 
\,\,\, \Longleftrightarrow \,\,\,
%\sum\nolimits_{z \in Z}u(z)p(z) \ge \sum\nolimits_{z \in Z}u(z)q(z) 
%\mathbf{E}_p[u] \ge \mathbf{E}_q[u]\,\,
%\text{ for all }u \in U,
\forall u \in U, \,\,
\mathbf{E}_p[u] \ge \mathbf{E}_q[u],
\end{equation}
for all lotteries $p,q \in \Delta$. 

To this aim, we endow the vector space $\mathbf{R}_0^Z$ with the weak-topology 
$$
w:=\sigma(\mathbf{R}_0^Z, \mathbf{R}^Z),
$$
so that a net $(p_i)_{i \in I}$ weak-converges to some $p \in \mathbf{R}_0^Z$ if and only if 
$$%\begin{equation}\label{eq:wconvergencedefinitionsjfkg}
\left(\hspace{.2mm}\mathbf{E}_{p_i}[u]\hspace{.2mm}\right)_{i \in I} \to \mathbf{E}_{p}[u]
%\lim\nolimits_{i \in I}\,\mathbf{E}_{p_i}[u]= \mathbf{E}_{p}[u]
\,\,\,
\text{ for all  }u \in \mathbf{R}^Z.
$$%\end{equation}
Note that $(\mathbf{R}_0^Z, w)$ is a locally convex Hausdorff topological vector space. 
Moreover, we endow 
$\mathbf{R}^Z$ with the weak$^\star$-topology $\sigma(\mathbf{R}^Z, \mathbf{R}_0^Z)$ which is, equivalently, the topology of pointwise convergence, so that a net $(u_i)_{i \in I}$ weak$^\star$-converges to some $u\in \mathbf{R}^Z$ if and only if 
$$
(u_i(z))_{i \in I} \to u(z) 
\,\,\,
\text{ for all }z \in Z.
$$

We come now to the definition of sequential continuity of $\succsim$, as in \cite{Dubra_et_al}.
\begin{defi}\label{def:continuityaxiom}
Let $Z$ be a nonempty set. A binary relation $\succsim$ on $\mathbf{R}_0^Z$ is  $w$-\emph{sequentially continuous} if, for all weak-convergent sequences $(p_n)_{n\ge 1}$ and $(q_n)_{n\ge 1}$ with limits $p$ and $q$, respectively, then $p\succsim q$ whenever $p_n\succsim q_n$ for all $n \ge 1$.
\end{defi}

Finally, let $e \in \mathbf{R}^Z$ be the constant function $1$, and write $\overline{\mathrm{co}}$ for the closed convex hull operator and $\mathrm{cone}(U):=\{\lambda x: \lambda>0, x \in U\}$ 
for the cone generated by a nonempty set $U$ in a topological vector subspace of $\mathbf{R}^Z$ containing $e$, 
cf. Section \ref{sec:proofs} for details. 
Following the notation in \cite{Dubra_et_al}, %for each nonempty $U\subseteq \mathbf{R}^Z$, 
we set also 
$$
\langle \, U\,\rangle:=
%\overline{\mathrm{co}}(\left\{\lambda x+\theta e: x \in U, \lambda>0, \theta \in \mathbf{R}\right\}).
\overline{\mathrm{co}}(\mathrm{cone}(U)+\left\{\theta e: \theta \in \mathbf{R}\right\}).
$$
We can now provide an affirmative answer to the open question in \cite[Remark 1]{Dubra_et_al} for the case of countable sets.\footnote{In this work, \textquotedblleft countable\textquotedblright\ stands for \textquotedblleft finite or countably infinite.\textquotedblright} 

\begin{thm}\label{thm:mainweaktopology}
Let $Z$ be a nonempty countable set and $\succsim$ be a binary relation on $\Delta$. 
Then $\succsim$ is a $w$-sequentially continuous preorder which satisfies the independence axiom if and only if there exists a nonempty set $U$ of utility functions $u: Z\to \mathbf{R}$ such that \eqref{eq:claimedmultiutility} holds for all lotteries $p,q \in \Delta$. 

In addition, if $V\subseteq \mathbf{R}^Z$ is another nonempty set of utility functions with the same property, then 
$\langle \, U\,\rangle=\langle \, V\,\rangle$.
%$\overline{\mathrm{co}}(\mathrm{cone}(U))=\overline{\mathrm{co}}(\mathrm{cone}(V))$. 
\end{thm}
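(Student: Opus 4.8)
The plan is to translate the representation problem into the language of closed convex cones in the dual pair $(\mathbf{R}_0^Z, \mathbf{R}^Z)$ and then invoke a bipolar-type argument. First I would associate to $\succsim$ the set
$$
C:=\{\,p-q : p,q\in\Delta,\ p\succsim q\,\}\subseteq \mathbf{R}_0^Z.
$$
The independence axiom, together with reflexivity and transitivity of the preorder, should give that $C$ is a convex cone containing $0$; the key computation here is that $\alpha(p-q)=\bigl(\alpha p+(1-\alpha)r\bigr)-\bigl(\alpha q+(1-\alpha)r\bigr)$ witnesses scale-invariance on $(0,1)$ and additivity follows by averaging two difference vectors and rescaling. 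Moreover one checks that $p\succsim q$ if and only if $p-q\in C$, using again independence to move between an arbitrary difference of lotteries and an element of $C$ with the right normalization. The $w$-sequential continuity of $\succsim$ translates into $C$ being sequentially closed in $(\mathbf{R}_0^Z,w)$; here is where countability of $Z$ enters decisively, since on a countable $Z$ the weak topology $w=\sigma(\mathbf{R}_0^Z,\mathbf{R}^Z)$ restricted to the relevant bounded/affine pieces is metrizable enough that sequential closedness upgrades to closedness — this metrizability failure is exactly what breaks down for uncountable $Z$ and drives Theorem \ref{thm:mainnegativecharacterization}.

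Next I would take polars. Set $C^\circ:=\{\,u\in\mathbf{R}^Z : \mathbf{E}_x[u]\ge 0\text{ for all }x\in C\,\}$, the dual cone of $C$ in the pairing $\langle x,u\rangle=\mathbf{E}_x[u]$. By the bipolar theorem for dual pairs of vector spaces — applicable because $w$ is exactly the weak topology of the pairing and $C$ is a $w$-closed convex cone — we get $C^{\circ\circ}=C$. Unwinding the definition, $C^{\circ\circ}=C$ says precisely that $x\in C$ if and only if $\mathbf{E}_x[u]\ge 0$ for every $u\in C^\circ$. Applying this with $x=p-q$ yields
$$
p\succsim q \iff p-q\in C \iff \mathbf{E}_p[u]\ge \mathbf{E}_q[u]\ \text{for all }u\in C^\circ,
$$
so the set $U:=C^\circ$ furnishes the desired expected multi-utility representation (and it is nonempty since the constant functions lie in $C^\circ$ — because $\mathbf{E}_{p-q}[e]=0$ for differences of lotteries). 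This proves the "if and only if": the converse direction (that any relation admitting such a representation is a $w$-sequentially continuous preorder satisfying independence) is routine, since each functional $p\mapsto \mathbf{E}_p[u]$ is linear and $w$-continuous, and an intersection of preorders defined by $w$-continuous linear inequalities has all three properties.

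For the uniqueness statement, suppose $V\subseteq\mathbf{R}^Z$ is another representing set. I would show $\langle U\rangle = \langle V\rangle$ by showing both equal $C^\circ$, or more precisely that $\langle U\rangle$ depends only on $\succsim$. The point is that $\langle U\rangle=\overline{\mathrm{co}}(\mathrm{cone}(U)+\mathbf{R}e)$ is the smallest $w^\star$-closed convex cone containing $U$ and the line $\mathbf{R}e$; I claim this equals $C^\circ$ whenever $U$ represents $\succsim$. Indeed $U\subseteq C^\circ$ and $\mathbf{R}e\subseteq C^\circ$ (both by the representation), and $C^\circ$ is a $w^\star$-closed convex cone, so $\langle U\rangle\subseteq C^\circ$. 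Conversely, if $u_0\in C^\circ\setminus\langle U\rangle$, a Hahn–Banach separation in the pair $(\mathbf{R}^Z,\mathbf{R}_0^Z)$ produces some $x\in\mathbf{R}_0^Z$ with $\langle x,u_0\rangle<0\le\langle x,u\rangle$ for all $u\in\langle U\rangle\supseteq U\cup\mathbf{R}e$; the condition $\langle x,e\rangle\ge 0$ and $\langle x,-e\rangle\ge 0$ forces $\sum_z x(z)=0$, so $x$ can be normalized to a difference $p-q$ of lotteries (scaling and splitting into positive and negative parts, which have equal mass). Then $\mathbf{E}_p[u]\ge\mathbf{E}_q[u]$ for all $u\in U$ gives $p\succsim q$, hence $p-q\in C$, hence $\langle x,u_0\rangle=\mathbf{E}_{p-q}[u_0]\ge 0$, a contradiction. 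Thus $C^\circ\subseteq\langle U\rangle$, so $\langle U\rangle=C^\circ=\langle V\rangle$.

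The main obstacle I anticipate is the passage from $w$-\emph{sequential} closedness of $C$ to genuine $w$-closedness, which is needed before the bipolar theorem applies; this is where the countability hypothesis must be used carefully (likely by exhibiting, for the relevant subsets of $\mathbf{R}_0^Z$, a metrizable topology agreeing with $w$, or by a direct diagonal/exhaustion argument over an enumeration of $Z$), and it is presumably the technical heart separating the positive result from the uncountable counterexample.
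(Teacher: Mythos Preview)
Your overall architecture---cone plus bipolar plus Hahn--Banach for uniqueness---matches the paper's, but you have misplaced the difficulty and left the genuine technical heart unproved.

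First a minor point: your set $C=\{p-q:p\succsim q\}$ is \emph{not} a cone. It is bounded (contained in the $\ell_1$-ball of radius $2$), and your ``scale-invariance on $(0,1)$'' only gives star-shapedness at $0$. The paper works instead with $C=\{\lambda(p-q):\lambda\ge 0,\ p\succsim q\}$; you should too.

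The real gap is your sentence ``The $w$-sequential continuity of $\succsim$ translates into $C$ being sequentially closed.'' This is not a translation; it is the hardest step in the whole proof (the paper's Theorem~\ref{thm:a10a11}). Concretely: given $x_n\in C$ with $x_n\to x_0$ weakly, write $x_n=\alpha_n(p_n-q_n)$ with $p_n\succsim q_n$. Sequential continuity of $\succsim$ lets you conclude $p_0\succsim q_0$ \emph{only if} you know $p_n\to p_0$ and $q_n\to q_0$ in $\Delta$. But there is no reason the individual lotteries should converge: e.g.\ with $Z=\mathbf{N}$, the constant sequence $x_n=e_1-e_2$ can be written as $p_n-q_n$ with $p_n=\tfrac12 e_1+\tfrac12 e_n$, $q_n=\tfrac12 e_2+\tfrac12 e_n$, and $(e_n)$ has no $w$-convergent subsequence. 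The paper resolves this by first passing to the \emph{orthogonal} decomposition $x_n=\alpha_n(p_n-q_n)$ with $p_n\wedge q_n=0$ (Lemma~\ref{lem:disjointsupports}), and then proving through a delicate argument (Claims~\ref{claim1}--\ref{claim4}, using the \textup{(\textsc{L})}-property of $\mathbf{R}^Z$) that in this canonical decomposition one does have $\alpha_n\to\alpha_0$, $p_n\to p_0$, $q_n\to q_0$. This step works for arbitrary $Z$ and has nothing to do with countability.

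Conversely, the step you flag as ``the main obstacle''---upgrading sequential closedness of $C$ to closedness---is where countability enters, but it is comparatively soft: for countable $Z$, a convex set in $\mathbf{R}_0^Z$ is weak-closed as soon as its trace on every finite-dimensional subspace is closed (the paper cites \cite[Proposition~8.5.28]{MR880207}), and sequential closedness already guarantees that. No metrizability of $w$ on bounded pieces is needed (and none is available in the form you suggest). Your uniqueness argument via separation is fine and agrees with the paper's route through Theorem~\ref{thm:closedconesdecisiontheory} and Remark~\ref{rmk:dualpairconstants}.
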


It is worth to remark that the case where $Z$ is finite has been already characterized in \cite{Dubra_et_al}, replacing the $w$-sequential continuity of $\succsim$ with the, here equivalent, requirement 
$$%\begin{equation}\label{eq:weakcontinuity}
%\forall p,q,r,s \in \Delta,\quad 
\{\alpha \in [0,1]: \alpha p+(1-\alpha)q \succsim \alpha r+(1-\alpha)s\} \text{ is closed}
$$%\end{equation}
for all lotteries $p,q,r,s \in \Delta$, see \cite[Proposition 1]{Dubra_et_al}. 
Also, when paired with certain continuity requirements on the reflexive binary relation $\succsim$, the independence axiom can be slightly weakened, see e.g. \cite[Lemma 1]{Dubra_et_al}.

The first part of Theorem \ref{thm:mainweaktopology} has been obtained also by Cerreia-Vioglio in \cite{Cerreia09} and by McCarthy et al. \cite{MR4343799}, although with different proofs. 
In our case, the route is different. 
Indeed, we are going to provide in Corollary \ref{cor:necesuff} a necessary and sufficient condition on the set of outcomes $Z$ (with an arbitrary cardinality) for which every $w$-sequentially continuous preorder satisfying the independence axiom admits an expected multi-utility representation, and viceversa. 
At that point, the proof of Theorem \ref{thm:mainweaktopology} will follow by checking that such condition is satisfied by all countable sets. 

Next, we present our negative result that such condition does not hold for \emph{every} uncountable $Z$. This provides a negative answer to \cite[Remark 1]{Dubra_et_al} in the remaining cases.\footnote{A simpler negative result in this direction, replacing $w$-sequential continuity with  \textquotedblleft algebraic continuity\textquotedblright\ of $\succsim$, can be obtained from \cite[Example 4.3]{MR4343799}.} 
\begin{thm}\label{thm:mainnegativecharacterization}
Let $Z$ be an uncountable set. Then there exists a $w$-sequentially continuous preorder $\succsim$ on $\Delta$ which satisfies the independence axiom and such that the representation \eqref{eq:claimedmultiutility} fails for all nonempty sets $U$ of utility functions $u: Z\to \mathbf{R}$. 
\end{thm}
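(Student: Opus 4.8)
The plan is to recast the statement in the language of convex cones and then exploit a gap, available only in uncountable dimension, between weak sequential closedness and weak closedness. Following the route announced before Corollary~\ref{cor:necesuff}, to a reflexive binary relation $\succsim$ on $\Delta$ satisfying the independence axiom one associates the convex cone $C:=\mathrm{cone}(\{p-q:p\succsim q\})$, which contains $0$ (as $\succsim$ is reflexive) and is contained in the hyperplane $H:=\{x\in\mathbf{R}_0^Z:\sum_{z\in Z}x(z)=0\}$; transitivity together with independence makes $\{p-q:p\succsim q\}$ convex and $C$ a convex cone with $C+C\subseteq C$, and since every element of $H$ is a nonnegative multiple of a difference of two lotteries one recovers $\succsim$ from $C$ via $p\succsim q\iff p-q\in C$. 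Because the topological dual of $(\mathbf{R}_0^Z,w)$ is exactly $\mathbf{R}^Z$, the bipolar theorem identifies $\overline{C}^{\,w}$ with $C^{\circ\circ}$, and one checks that $\succsim$ admits a representation of the form~\eqref{eq:claimedmultiutility} if and only if $C$ is $w$-closed (for $U$ one then takes the dual cone $C^\circ\subseteq\mathbf{R}^Z$, which always contains $\mathbf{R}e$, so that $\langle U\rangle=C^\circ$); likewise $\succsim$ is $w$-sequentially continuous if and only if $C$ is $w$-sequentially closed. Thus Theorem~\ref{thm:mainnegativecharacterization} reduces to a purely linear-topological statement: for every uncountable $Z$ there is a convex cone in $\mathbf{R}_0^Z$, contained in $H$, that is $w$-sequentially closed but not $w$-closed.

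The next step is the elementary observation that a sequence $(x_n)$ in $\mathbf{R}_0^Z$ can $w$-converge only if its terms are eventually supported on a common finite set: otherwise one extracts a subsequence $(x_{n_k})$ and distinct points $z_k\in\mathrm{supp}(x_{n_k})$ with $z_k\notin\mathrm{supp}(x_{n_j})$ whenever $k<j$, and then builds recursively a $u\in\mathbf{R}^Z$ supported on $\{z_k:k\ge 1\}$ with $\mathbf{E}_{x_{n_k}}[u]=1$ for all $k$, contradicting $w$-convergence. Since on any finite-dimensional subspace all Hausdorff linear topologies agree, this shows that a set $C\subseteq\mathbf{R}_0^Z$ is $w$-sequentially closed if and only if $C\cap\mathbf{R}_0^F$ is closed for every finite $F\subseteq Z$. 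So the target becomes: a convex cone $C\subseteq H\subseteq\mathbf{R}_0^Z$ whose intersection with every finite-dimensional coordinate subspace is closed, yet which is strictly contained in its bipolar $C^{\circ\circ}$.

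For the construction I would use that an uncountable $Z$ contains a copy of $\omega_1$ together with one extra point $z_\star$ (the latter only to rebalance total masses so that the generators lie in $H$), fix an injection $\alpha\mapsto z_\alpha$ of the ordinals below $\omega_1$ into $Z$, and define $C$ by transfinite recursion along $\omega_1$ as an increasing union $\bigcup_{\alpha<\omega_1}C_\alpha$ of finitely generated closed cones, with generators linking successive blocks of the coordinates $z_\alpha$ in a way reminiscent of the (non-closed) cone over a parabola but spread out over $\omega_1$-many coordinates. The recursion is to be arranged so that: (i) a distinguished vector $y$ lies in $C^{\circ\circ}$, because every $u\in\mathbf{R}^Z$ that is nonnegative on $C$ is forced to be nonnegative on $y$; (ii) $y\notin C$; and (iii) $C\cap\mathbf{R}_0^F$ is closed for every finite $F$, because $F$ meets only finitely many of the $z_\alpha$ and, below any countable stage, the cone already contains everything that this finite block of coordinates can detect. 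Granting such a $C$, the relation $p\succsim q:\Longleftrightarrow p-q\in C$ is a $w$-sequentially continuous preorder satisfying the independence axiom, and it cannot satisfy~\eqref{eq:claimedmultiutility} for any nonempty $U$, since such a representation would exhibit $C$ as an intersection of $w$-closed half-spaces (intersected with $H$), hence as a $w$-closed set.

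The crux, and the step I expect to be the main obstacle, is reconciling (i)--(ii) with (iii): making the cone genuinely fail to be $w$-closed while keeping every finite-dimensional section closed. This is exactly what is impossible over a countable outcome set --- there the analogous recursion terminates at a countable stage and produces a $w$-closed cone, which is precisely why Theorem~\ref{thm:mainweaktopology} holds in the countable case --- and it becomes possible over an uncountable $Z$ precisely because $\omega_1$ has uncountable cofinality, so that no sequence, and no finite set of coordinates, can witness the gap while an $\omega_1$-indexed net can. I would isolate the requisite cone as a self-contained lemma and then deduce Theorem~\ref{thm:mainnegativecharacterization} from it via the dictionary of the first paragraph.
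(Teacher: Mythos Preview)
Your reduction is correct and coincides with the paper's route: via the dictionary in your first paragraph (which is exactly Corollary~\ref{cor:necesuff}), the theorem amounts to exhibiting a convex cone $C\subseteq\Sigma$ that is sequentially $w$-closed but not $w$-closed. Your observation that every $w$-convergent sequence in $\mathbf{R}_0^Z$ is eventually supported on a common finite set, and hence that sequential $w$-closedness is equivalent to closedness of every finite-dimensional coordinate section, is also correct and is a clean way to phrase the target (a small quibble: for the recursive construction of $u$ to run forward you want $z_k\notin\mathrm{supp}(x_{n_j})$ for $j<k$, i.e.\ the new point avoids the \emph{earlier} supports, not the later ones).

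The genuine gap is that the cone is never constructed. Your items (i)--(iii) are precisely the desiderata, and the $\omega_1$-recursion you describe is at best a plausible shape; the phrase ``granting such a $C$'' is exactly where all of the content lies, and nothing in the proposal pins down the generators or verifies the three properties. The paper carries this out explicitly and without any ordinals or transfinite recursion: one partitions $Z=Z_1\cup Z_2$ with $|Z_1|=|Z_2|$ and a bijection $h:Z_1\to Z_2$, fixes $a\in Z_1$, sets $\tilde e(B):=|B|^{-2}\sum_{b\in B}(e_b-e_{h(b)})$ for nonempty finite $B\subseteq Z_1$, and lets $C$ be the sequential $w$-closure of the convex cone $C_0$ generated by $\{\tilde e(\{a\})+\tilde e(B):\emptyset\neq B\subseteq Z_1\setminus\{a\}\text{ finite}\}$. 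One shows by hand that $\tilde e(\{a\})$ is not the $w$-limit of any sequence from $C_0$ (by manufacturing a utility that blows up along a suitable subsequence), whereas $\tilde e(\{a\})$ \emph{does} lie in the $w$-closure: any separating functional would be bounded on an \emph{uncountable} subset of $Z_1\setminus\{a\}$ by pigeonhole, and the $|B|^{-2}$ scaling then forces a contradiction by taking $B$ large inside that subset. Uncountability of $Z$ enters only through this single pigeonhole step, not through the cofinality of $\omega_1$; your finite-section criterion would streamline the verification of sequential closedness, but it does not by itself suggest the construction, which is the heart of the proof.
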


%\begin{rmk}\label{rmk:noncontradictionDMO}
%\textcolor{red}{Lastly, it is worth to remark that our negative result, i.e., Theorem \ref{thm:mainnegativecharacterization}, 

It is worth to remark that the above result does not contradict Dubra, Maccheroni and Ok's original characterization \cite{Dubra_et_al}. In the latter, as anticipated in Section \ref{sec:int}, the authors consider binary relations over the set $\mathscr{P}$ of Borel probability measures on a compact metric space $Z$, and replace the notion of $w$-sequential convergence $\lim_n p_n=p$ with the weaker one $\lim_n\mathbf{E}_{p_n}[u]=\mathbf{E}_p[u]$ for all \emph{continuous} $u: Z\to \mathbf{R}$ (which we denote hereafter by $C(Z)$-sequential convergence). In fact, a direct consequence 
from Theorem \ref{thm:mainnegativecharacterization} and \cite{Dubra_et_al} 
is the following: 
%\end{rmk}
\begin{cor}\label{cor:noncontradiction}
Let $Z$ be an uncountable compact metric space. Then there exists a $w$-sequentially continuous preorder $\succsim$ on $\Delta$ which satisfies the independence axiom and such that: If $\succsim^\prime$ is a $C(Z)$-sequentially continuous preorder on $\mathscr{P}$ which satisfies the independence axiom and $p\succsim q$ implies $p\succsim^\prime q$ for all $p,q \in \Delta$, then there exist $p_0,q_0 \in \Delta$ such that $p_0\succsim q_0$ while $p_0\not\succsim^\prime q_0$. 
%extends $\succsim$, then 
%$$
%\succsim^\prime \upharpoonright \Delta^2 \neq \,\,\succsim.
%$$}
\end{cor}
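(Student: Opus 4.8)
The plan is to obtain Corollary~\ref{cor:noncontradiction} as a short deduction from Theorem~\ref{thm:mainnegativecharacterization} together with the expected multi-utility representation theorem of Dubra, Maccheroni, and Ok on the space $\mathscr{P}$ of Borel probability measures. First I would apply Theorem~\ref{thm:mainnegativecharacterization} (legitimate here, since an uncountable compact metric space is in particular an uncountable set) to fix a $w$-sequentially continuous preorder $\succsim$ on $\Delta$ that satisfies the independence axiom and for which \eqref{eq:claimedmultiutility} fails for \emph{every} nonempty family $U$ of utility functions $u\colon Z\to\mathbf{R}$; in particular, it fails for every family of \emph{continuous} utility functions. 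This $\succsim$ is the relation asserted to exist in the statement, so all that remains is to show it is incompatible, in the announced sense, with every $C(Z)$-sequentially continuous preorder on $\mathscr{P}$ satisfying independence.

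Next, let $\succsim^\prime$ be an arbitrary $C(Z)$-sequentially continuous preorder on $\mathscr{P}$ that satisfies the independence axiom, and suppose towards a contradiction that $\succsim$ and $\succsim^\prime$ compare the same way on every pair of simple lotteries. Since $Z$ is a compact metric space, the representation theorem of \cite{Dubra_et_al} applies to $\succsim^\prime$ and yields a nonempty set $U\subseteq C(Z)$ of continuous utility functions such that
\[
p\succsim^\prime q \quad\Longleftrightarrow\quad \mathbf{E}_p[u]\ge \mathbf{E}_q[u] \ \text{ for all } u\in U
\]
for all $p,q\in\mathscr{P}$. Restricting this equivalence to the pairs $p,q\in\Delta\subseteq\mathscr{P}$, and recalling that $\mathbf{E}_p[u]=\sum\nolimits_{z\in Z}u(z)p(z)$ for $p\in\Delta$, I obtain an expected multi-utility representation of the restriction $\succsim^\prime|_{\Delta}$ in precisely the form \eqref{eq:claimedmultiutility}. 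By the contradiction hypothesis this is a representation of $\succsim$ itself, contradicting the choice of $\succsim$. Hence $\succsim$ and $\succsim^\prime$ cannot agree on $\Delta$; combined with the inclusion between $\succsim$ and $\succsim^\prime$ on $\Delta$ assumed in the statement, this produces the lotteries $p_0,q_0\in\Delta$ exhibiting the asserted discrepancy.

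Finally, I would record two remarks. The statement is not vacuous: the trivial preorder on $\mathscr{P}$ (all measures pairwise related) is $C(Z)$-sequentially continuous, satisfies independence, and extends $\succsim$, so the class of relations $\succsim^\prime$ under consideration is nonempty. And there is no conflict with \cite{Dubra_et_al}, because the relation $\succsim$ produced by Theorem~\ref{thm:mainnegativecharacterization} is necessarily \emph{not} $C(Z)$-sequentially continuous: $w$-sequential continuity on $\Delta$ is strictly weaker, since the $w$-topology requires convergence of $\mathbf{E}_{p_n}[u]$ for \emph{all} $u\in\mathbf{R}^Z$, not only for continuous $u$. I do not anticipate a genuine obstacle in carrying this out; the only points needing a little care are checking that the hypotheses of \cite{Dubra_et_al} ($Z$ compact metric, the $C(Z)$-sequential continuity notion) match exactly what is assumed on $\succsim^\prime$, and that the restriction of their $\mathscr{P}$-representation to $\Delta$ is literally a representation of the form \eqref{eq:claimedmultiutility}, so that it truly contradicts the choice of $\succsim$.
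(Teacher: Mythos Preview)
Your argument is correct and matches the paper's proof exactly: take $\succsim$ from Theorem~\ref{thm:mainnegativecharacterization}, assume for contradiction that $\succsim'$ coincides with $\succsim$ on $\Delta$, apply the Dubra--Maccheroni--Ok representation to $\succsim'$, restrict to $\Delta$, and contradict the choice of $\succsim$. One point worth flagging (which your own trivial-preorder remark in fact exposes): the conclusion of the corollary as printed has the roles of $\succsim$ and $\succsim'$ reversed---the literal assertion ``$p_0\succsim q_0$ while $p_0\not\succsim' q_0$'' directly contradicts the hypothesis and would fail for the trivial $\succsim'$; what both you and the paper actually establish is the intended conclusion $p_0\succsim' q_0$ while $p_0\not\succsim q_0$.
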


The advantage of our approach is that the technical tools that we provide in Section \ref{sec:proofs} allow us to obtain analogue results in different settings. 
As an example, suppose that $Z$ is a nonempty countable set and define the weak-topology 
$$
\nu:=\sigma(\mathbf{R}_0^Z, \ell_\infty^Z),
$$
where $\ell_\infty^Z$ is the vector space of bounded utility functions (hence a net $(p_i)_{i \in I}$ in $\mathbf{R}_0^Z$ is $\nu$-convergent to $p\in \mathbf{R}_0^Z$ if and only if $(\mathbf{E}_{p_i}[u])_{i \in I}\to \mathbf{E}_p[u]$ for all $u \in \ell_\infty^Z$). The definition of $\nu$-sequential continuity for $\succsim$ goes as in Definition \ref{def:continuityaxiom} and, similarly, endow $\ell_\infty^Z$ with the weak$^\star$-topology $\sigma(\ell_\infty^Z, \mathbf{R}_0^Z)$. 

Then, we have the following bounded analogue of Theorem \ref{thm:mainweaktopology}.
\begin{thm}\label{thm:mainweaktopologybounded}
Let $Z$ be a nonempty countable set and $\succsim$ be a binary relation on $\Delta$. 
Then $\succsim$ is a $\nu$-sequentially continuous preorder which satisfies the independence axiom if and only if there exists a nonempty set $U$ of bounded utility functions $u \in \ell_\infty^Z$ such that \eqref{eq:claimedmultiutility} holds for all lotteries $p,q \in \Delta$. 

In addition, if $V\subseteq \ell_\infty^Z$ is another nonempty set of bounded utility functions with the same property, then $\langle\, U\,\rangle=\langle\, V\,\rangle$.
%$\overline{\mathrm{co}}(\mathrm{cone}(U))=\overline{\mathrm{co}}(\mathrm{cone}(V))$. 
\end{thm}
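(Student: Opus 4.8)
The plan is to mirror the argument used for Theorem~\ref{thm:mainweaktopology}, simply replacing the dual pair $(\mathbf{R}_0^Z, \mathbf{R}^Z)$ with the dual pair $(\mathbf{R}_0^Z, \ell_\infty^Z)$ throughout. The easy implication is the "if" direction: given a set $U \subseteq \ell_\infty^Z$ for which \eqref{eq:claimedmultiutility} holds, reflexivity and transitivity of $\succsim$ are immediate, the independence axiom follows from the linearity of $p \mapsto \mathbf{E}_p[u]$, and $\nu$-sequential continuity follows because each $u \in U$ is $\nu$-continuous (indeed $\nu$-continuous functionals on $\mathbf{R}_0^Z$ are exactly the evaluations against elements of $\ell_\infty^Z$), so if $p_n \succsim q_n$ for all $n$ and $p_n \to p$, $q_n \to q$ in $\nu$ then $\mathbf{E}_p[u] = \lim_n \mathbf{E}_{p_n}[u] \ge \lim_n \mathbf{E}_{q_n}[u] = \mathbf{E}_q[u]$ for every $u \in U$.

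For the "only if" direction I would follow the standard Dubra--Maccheroni--Ok route, adapted to the present pairing. Set $C := \overline{\{\,p - q : p \succsim q\,\}}^{\,\nu} \subseteq \mathbf{R}_0^Z$; the independence axiom together with reflexivity and transitivity show that $\{p-q: p\succsim q\}$ is a convex cone containing $0$, and $\nu$-sequential continuity is used (exactly as in the countable case, where sequences suffice to capture the relevant closure properties because $Z$ is countable and $\ell_\infty^Z$ is then a separable-by-countable-coordinates object) to show that $C$ is $\nu$-closed and that $p \succsim q \iff p - q \in C$. By the bipolar theorem for the dual pair $(\mathbf{R}_0^Z, \ell_\infty^Z)$, the closed convex cone $C$ equals $C^{\circ\circ}$, where the polar is taken inside $\ell_\infty^Z$; hence setting $U := C^{\circ} = \{u \in \ell_\infty^Z : \mathbf{E}_x[u] \ge 0 \text{ for all } x \in C\}$ gives precisely $p - q \in C \iff \mathbf{E}_p[u] \ge \mathbf{E}_q[u]$ for all $u \in U$, which is \eqref{eq:claimedmultiutility}; and $U$ is nonempty since $e \in U$. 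The essential-uniqueness claim $\langle U\rangle = \langle V\rangle$ follows because, for any representing set $U$, one checks $\langle U \rangle = C^{\circ}$: the inclusion $\langle U\rangle \subseteq C^\circ$ is clear since $C^\circ$ is a $\sigma(\ell_\infty^Z,\mathbf{R}_0^Z)$-closed convex cone containing $U$ and $e$, and the reverse inclusion uses bipolarity again, noting that the representation forces $C = U^\circ = \langle U\rangle^\circ$, whence $C^\circ = \langle U\rangle^{\circ\circ} = \langle U\rangle$.

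The main obstacle — and the only place where the countability of $Z$ is genuinely needed — is showing that $C$ is $\nu$-\emph{closed} given only $\nu$-\emph{sequential} continuity of $\succsim$. In general sequential closedness does not imply closedness, but here one exploits that $\nu$ restricted to the relevant sets (or to $\mathbf{R}_0^Z$ viewed through countably many coordinates, since $Z$ is countable) is metrizable enough, or one argues directly that the $\nu$-closure of the convex cone $\{p-q : p\succsim q\}$ is attained via sequences; this is exactly the step already carried out for Theorem~\ref{thm:mainweaktopology} and for the general characterization in Corollary~\ref{cor:necesuff}, and I would invoke it rather than reprove it, checking only that the argument is insensitive to whether the test functions range over all of $\mathbf{R}^Z$ or only over $\ell_\infty^Z$. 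The remaining subtlety is purely bookkeeping: verifying that the bipolar theorem applies to $(\mathbf{R}_0^Z, \ell_\infty^Z)$ as a genuine dual pair (the bilinear form $(p,u) \mapsto \mathbf{E}_p[u]$ is well defined since $p$ is finitely supported, and it separates points of both spaces), which is routine.
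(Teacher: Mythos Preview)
Your proposal is correct and follows essentially the same route as the paper: the paper proves Theorems~\ref{thm:mainweaktopology} and~\ref{thm:mainweaktopologybounded} simultaneously by invoking Corollary~\ref{cor:necesuff} with $W=\mathbf{R}^Z$ and $W=\ell_\infty^Z$ respectively (noting that $\ell_\infty^Z$ has the \textup{(}\textsc{L}\textup{)}-property), and verifying condition~\ref{item:mainA} of that corollary via Theorem~\ref{thm:a11a1}; uniqueness is exactly the equivalence \ref{S6:item}\,$\Leftrightarrow$\,\ref{S8:item} of Theorem~\ref{thm:closedconesdecisiontheory} on the dual pair $(\Sigma, W/\Theta)$. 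Two minor points of presentation: you should not define $C$ with a closure bar, since the whole content is that the cone $\{\lambda(p-q):\lambda\ge 0,\ p\succsim q\}$ is \emph{already} $\nu$-closed; and the mechanism by which sequential closedness upgrades to closedness is not any metrizability of $\nu$ (indeed $\ell_\infty^Z$ is not separable) but the countable algebraic dimension of $\mathbf{R}_0^Z$, handled in Theorem~\ref{thm:a11a1} via a barrelled-space result.
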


A weaker version of the first part of Theorem \ref{thm:mainweaktopologybounded} has been obtained by Evren in \cite[Proposition 2]{MR2398816} by means of Banach space techniques. 

In both positive results (Theorem \ref{thm:mainweaktopology} and Theorem \ref{thm:mainweaktopologybounded}), 
the preorder $\succsim$ will be, in addition, monotone with respect to a given binary relation $\trianglerighteq$ on $Z$ if and only if the claimed set $U$ contains only increasing utility functions, see Corollary \ref{rmk:monotonicity} for details and definitions. 

It is worth noting that \emph{every} real vector space $X$ can be regarded as a $\mathbf{R}_0^Z$ space. 
Indeed, let $Z$ be a Hamel basis of $X$ and to consider the natural isomorphism $T: X\to \mathbf{R}_0^Z$ defined so that, for each vector $x \in X$, $T(x)(z)=\lambda_z$ 
for every 
%$x \in X$ and 
$z \in Z$, where $\lambda_z$ is the real coefficient in the unique representation $x=\sum\nolimits_{z \in Z}\lambda_z z$. 

Lastly, let us characterize the binary relations $\succsim$ on $\Delta$ which are only reflexive and satisfy the independence axiom. In particular, $\succsim$ is not necessarily transitive nor (sequential) continuous. Analogous binary relations over Borel probability measures have been studied 
recently by Hara, Ok, and Riella in \cite{MR3957335} in the context of expected utility theory, cf. also \cite{MR2888839, 
MR3566440} for related results. 
\begin{thm}\label{thm:firstrepresentation}
Let $Z$ be a nonempty set and $\succsim$ be a binary relation over $\Delta$. Then $\succsim$ is reflexive and satisfies the independence axiom if and only if there exists a nonempty family 
$\mathscr{U}$ 
of nonempty sets $U$ of 
utility functions $u: Z \to \mathbf{R}$ such that 
\begin{equation}\label{eq:firstcharacterizationtoogeneral}
p\succsim q 
\quad \Longleftrightarrow \quad 
\forall U \in \mathscr{U}, \exists u \in U, \,\,
\mathbf{E}_p[u] \ge \mathbf{E}_q[u],
\end{equation}
for all lotteries $p,q \in \Delta$.
\end{thm}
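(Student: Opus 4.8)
The plan is to prove the two implications separately, with the bulk of the work in the ``only if'' direction. The ``if'' direction is routine: given a family $\mathscr{U}$ as in \eqref{eq:firstcharacterizationtoogeneral}, reflexivity of $\succsim$ follows because for each $U \in \mathscr{U}$ and any $u \in U$ we have $\mathbf{E}_p[u] \ge \mathbf{E}_p[u]$; and the independence axiom follows because the map $p \mapsto \mathbf{E}_p[u]$ is affine for every fixed $u$, so for any $\alpha \in (0,1)$ and any $r$ we have $\mathbf{E}_{\alpha p + (1-\alpha)r}[u] - \mathbf{E}_{\alpha q + (1-\alpha)r}[u] = \alpha(\mathbf{E}_p[u] - \mathbf{E}_q[u])$, which is $\ge 0$ iff $\mathbf{E}_p[u] \ge \mathbf{E}_q[u]$; quantifying over $u \in U$ and then over $U \in \mathscr{U}$ preserves the equivalence \eqref{eq:indepednetdef}.

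For the ``only if'' direction, suppose $\succsim$ is reflexive and satisfies the independence axiom. The idea is to build one ``set of utilities'' $U$ for \emph{each} pair of lotteries that is \emph{not} related, designed so that $U$ is the unique obstruction witnessing that non-relation, while being harmless for all pairs that \emph{are} related. Concretely, for every pair $(a,b) \in \Delta \times \Delta$ with $a \not\succsim b$, I would set
\[
U_{a,b} := \{\, u \in \mathbf{R}^Z : \mathbf{E}_p[u] \ge \mathbf{E}_q[u] \text{ for all } (p,q) \in \Delta\times\Delta \text{ with } p \succsim q \,\} \ \cup\ \{\, u_{a,b} \,\},
\]
where $u_{a,b}$ is a carefully chosen utility with $\mathbf{E}_a[u_{a,b}] < \mathbf{E}_b[u_{a,b}]$, and then take $\mathscr{U} := \{\, U_{a,b} : a \not\succsim b \,\}$. (If $\succsim$ is all of $\Delta\times\Delta$, take $\mathscr{U}$ to be the singleton family consisting of $U = \{e\}$, say, for which \eqref{eq:firstcharacterizationtoogeneral} holds trivially.) With this choice, the forward implication ``$p \succsim q \Rightarrow \forall U \in \mathscr{U}, \exists u \in U, \mathbf{E}_p[u] \ge \mathbf{E}_q[u]$'' holds because the first part of each $U_{a,b}$ already contains such a $u$ (indeed \emph{every} element of that first part works). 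The reverse implication ``$p \not\succsim q \Rightarrow \exists U \in \mathscr{U}, \forall u \in U, \mathbf{E}_p[u] < \mathbf{E}_q[u]$'' is where the work lies: we must exhibit, for a fixed non-related pair $(a,b)$, a single utility $u_{a,b}$ with $\mathbf{E}_a[u_{a,b}] < \mathbf{E}_b[u_{a,b}]$ and \emph{simultaneously} verify that every $u$ in the first part of $U_{a,b}$ also satisfies $\mathbf{E}_a[u] < \mathbf{E}_b[u]$ — but the latter is false in general, so the first part must be shrunk.

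The fix, which is the crux of the argument, is to replace the global ``first part'' by a pair-dependent one: let
\[
C_{a,b} := \{\, u \in \mathbf{R}^Z : \text{for all } (p,q) \text{ with } p \succsim q, \ \mathbf{E}_p[u] \ge \mathbf{E}_q[u], \ \text{ and } \ \mathbf{E}_a[u] \le \mathbf{E}_b[u] \,\},
\]
and set $U_{a,b} := C_{a,b}$ itself, provided $C_{a,b}$ is nonempty; the point is that every $u \in C_{a,b}$ is both a valid ``witness'' for all related pairs (first condition) and satisfies $\mathbf{E}_a[u] \le \mathbf{E}_b[u]$, so to finish we only need that $C_{a,b}$ contains at least one $u$ with the \emph{strict} inequality $\mathbf{E}_a[u] < \mathbf{E}_b[u]$. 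This is a separation statement: translating to the cone language of the paper, consider in $\mathbf{R}_0^Z$ the convex cone $K$ generated by $\{p - q : p \succsim q\}$ together with the difference $b - a$; I claim $a - b \notin K$ — otherwise one could, using the independence axiom repeatedly (which lets us rescale and add common lotteries, so that inequalities among the $p-q$ push forward to relations in $\succsim$), derive $a \succsim b$, contradicting $a \not\succsim b$. Here the independence axiom is exactly what makes ``$a-b$ lies in the cone generated by the $p_i - q_i$ with $p_i \succsim q_i$'' imply ``$a \succsim b$'' (one clears denominators, forms an appropriate convex combination with dummy lotteries, and uses \eqref{eq:indepednetdef} term by term) — this is the only place reflexivity and independence are really used, and it is the main obstacle. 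Given $a - b \notin K$, a Hahn--Banach separation in the duality $(\mathbf{R}_0^Z, \mathbf{R}^Z)$ produces a linear functional, i.e.\ a utility $u_{a,b} \in \mathbf{R}^Z$, with $\mathbf{E}_v[u_{a,b}] \ge 0$ for all $v \in K$ (hence $\mathbf{E}_p[u_{a,b}] \ge \mathbf{E}_q[u_{a,b}]$ whenever $p \succsim q$, and $\mathbf{E}_b[u_{a,b}] \ge \mathbf{E}_a[u_{a,b}]$) and $\mathbf{E}_{a-b}[u_{a,b}] < 0$, i.e.\ $\mathbf{E}_a[u_{a,b}] < \mathbf{E}_b[u_{a,b}]$; in particular $u_{a,b} \in C_{a,b}$, so $C_{a,b} \ne \emptyset$ and contains the needed strict witness. (No closure of $K$ is needed because we separate a point from a cone not containing it using only the algebraic/weak duality; if a topological subtlety arises one works with the linear span and ordering directly, as the paper's Section \ref{sec:proofs} machinery permits.) Taking $\mathscr{U} := \{\, C_{a,b} : a \not\succsim b \,\}$ — or the trivial singleton family if $\succsim = \Delta \times \Delta$ — then yields \eqref{eq:firstcharacterizationtoogeneral}, completing the proof.
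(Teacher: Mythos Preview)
Your argument has a genuine gap in the ``only if'' direction, rooted in the fact that Theorem~\ref{thm:firstrepresentation} does \emph{not} assume transitivity (this is precisely the point of the coalitional-style representation, cf.\ the paper's reference to Hara--Ok--Riella).

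First, a minor but fatal slip: your set $C_{a,b}$, defined with the \emph{weak} inequality $\mathbf{E}_a[u]\le \mathbf{E}_b[u]$, always contains the zero utility (and every constant), for which $\mathbf{E}_a[u]=\mathbf{E}_b[u]$. Hence for $U=C_{a,b}$ the witness $u=0$ gives $\mathbf{E}_a[u]\ge \mathbf{E}_b[u]$, and the reverse implication of \eqref{eq:firstcharacterizationtoogeneral} fails immediately.

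More importantly, even after tightening to the strict inequality, the separation step breaks down. You claim that $a-b\notin K$, where $K$ is the \emph{convex} cone generated by $\{p-q:p\succsim q\}\cup\{b-a\}$, arguing that otherwise ``using the independence axiom repeatedly'' would give $a\succsim b$. But combining several relations $p_i\succsim q_i$ into a single one via convex combinations requires transitivity: independence yields $\tfrac12 p_1+\tfrac12 p_2 \succsim \tfrac12 q_1+\tfrac12 p_2$ and $\tfrac12 q_1+\tfrac12 p_2 \succsim \tfrac12 q_1+\tfrac12 q_2$, and the last step chaining these is exactly a transitivity application. Without it, the cone $C=\{\lambda(p-q):\lambda\ge 0,\,p\succsim q\}$ need not be convex, and $a-b$ may well lie in its convex hull even though $a\not\succsim b$. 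In that case every $u$ that is nonnegative on all of $\{p-q:p\succsim q\}$ automatically satisfies $\mathbf{E}_a[u]\ge \mathbf{E}_b[u]$, so your $C_{a,b}$ (strict version) is empty and the construction collapses. Your parenthetical ``no closure of $K$ is needed'' does not save this: algebraic separation of a point from a convex cone still requires the point to lie outside the cone, which is exactly what fails.

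The paper sidesteps this by not insisting that each $U_{a,b}$ sit inside the dual cone $C'$. Working with the dual pair $(\mathbf{R}_0^Z,\mathbf{R}^Z)$ and the (possibly non-convex) pointed cone $C$, it applies a cone-representation result \cite[Theorem~2.2]{LeoPrin22} directly. In concrete terms, the correct choice is the full open half-space $U_{a,b}:=\{u:\mathbf{E}_a[u]<\mathbf{E}_b[u]\}$: condition~(ii) is then tautological, and condition~(i) reduces to the purely pairwise fact that $p-q$ is not a positive multiple of $a-b$ whenever $p\succsim q$ and $a\not\succsim b$ --- which follows from independence and reflexivity alone, with no transitivity or separation needed.
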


Additionally, the family $\mathscr{U}$ 
in the above representation 
is essentially unique, in a precise sense, see Remark \ref{rmk:uniquenessgeneral} below. 

To prove our results, we proceed as follows. 
We collect in Section \ref{sec:proofs} the necessary preliminaries on dual pairs and representations of closed convex cones in topological vector spaces. 
In Section \ref{sec:coneslotteries}, we study the existence of expected multi-utility representations in the special case of cones $C\subseteq \mathbf{R}_0^Z$, for dual pairs of the type $(\mathbf{R}_0^Z, W)$. The proofs of Theorems \ref{thm:mainweaktopology}--\ref{thm:firstrepresentation} follow in Section \ref{sec:mainproofs}. 

Finally, we highlight below the main concepts and results that will appear in the sequel:
\begin{enumerate}[label={\rm (\alph{*})}]
\item Theorem \ref{thm:closedconesdecisiontheory} provides the relationship between expected multi-utility representations and [sequentially weak-]closed convex cones, together with the corresponding uniqueness of the representing set of utility functions. The statement is formulated in the abstract setting of dual pairs. The generality of the framework is justified by the fact the proofs of several important results (including the works of Dubra, Maccheroni, and Ok \cite{Dubra_et_al} and Ok and Weaver \cite{OkWeaver}) are implicitly using the very same strategy. 
\item Definition \ref{def:Lproperty} provides the new notion of vector subspaces $W\subseteq \mathbf{R}^Z$  of utility functions, that is, vector spaces with the $\mathrm{(L)}$-property. This will be interpreted as the ability of the decision maker to consider closeness of lotteries (with respect to $W$) in the same way for all their conditional lotteries. 
\item Corollary \ref{cor:necesuff} provides necessary and sufficient conditons on the property that \emph{every} sequentially continuous preorder on $\Delta$ satisfying the independence axiom admits an expected multi-utility representation as in \eqref{eq:claimedmultiutility}. The statement is formulated in the setting of the dual pairs $(\mathbf{R}_0^Z, W)$, where $W$ has the $\mathrm{(L)}$-property. Informally, a representation is always possible if and only if every sequentially closed convex cone is also closed. This will serve as the key tool to prove both our positive and negative main results.  
\item Theorem \ref{thm:mainintermediate} shows that there exists a notion of continuity for $\succsim$ such that, if $Z$ is an \emph{arbitrary} nonempty set, then $\succsim$ is a continuous preorder on $\Delta$ satisfying the independence axiom if and only if representation \eqref{eq:claimedmultiutility} holds for some set $U$ made of countably-supported utility functions. 
\item Corollary \ref{rmk:monotonicity} provides that analogue of our positive main result (with $Z$ countable) for \emph{monotone} binary relations. This will be equivalent to the existence of a representation \eqref{eq:claimedmultiutility} for some set $U$ made of increasing utility functions. 
\end{enumerate}

%%%%%%%%%%%%%%%%%%%%%%%%%%%%%%%%%%%%%%%%%
\section{Dual pairs}\label{sec:proofs}

Given a real topological vector space (tvs) $X$, we denote by $X^\star$ and $X^\prime$ its algebraic dual and topological dual, respectively. 
A nonempty subset $C\subseteq X$ is a \emph{cone} if $\lambda x\in C$ for all $x \in C$ and $\lambda > 0$, and it said to be pointed if $0 \in C$. %it contains the vector $0$. 

A pair of vector spaces $(X,Y)$ is a \emph{dual pair} if there exists a bilinear map $\langle \cdot,\cdot \rangle: X\times Y\to \mathbf{R}$ such that the families $\{\langle \cdot, y\rangle: y \in Y\}$ and $\{\langle x, \cdot\rangle: x \in X\}$ separate the points of $X$ and $Y$, respectively. 
Unless otherwise stated, $X$ and $Y$ are endowed with the weak-topology $\sigma(X,Y)$ and the weak$^\star$-topology $\sigma(Y,X)$, respectively. 
Hence, for example, a net $(x_i)_{i \in I}$ in $X$ is weak-convergent to $x \in X$ if and only if the real net $\left(\langle x_i,y\rangle\right)_{i \in I}$ is convergent to $\langle x,y\rangle$ for each $y \in Y$. 
It is well known 
that both $X$ and $Y$ are locally convex Hausdorff tvs, and that $X^\prime=Y$ and $Y^\prime=X$, up to isomorphisms; 
conversely, if $X$ is a locally convex Hausdorff tvs, then $(X,X^\prime)$ is a dual pair, see e.g. \cite[Section 5.14]{MR2378491} . 

Given a dual pair $(X,Y)$, recall that if $S\subseteq X$ is a nonempty subset then its \emph{dual cone} is the weak$^\star$-closed convex cone
$$
S^\prime:=\{y \in Y: \forall x \in S, \, \langle x,y\rangle \ge 0\}.
$$ 
Also, it is known that every weak-closed convex cone $C\subseteq X$ is self-dual, meaning that $C=C^{\prime\prime}$; more explicitly, 
$$
C=\{x \in X: \forall y \in C^\prime, \,\,\langle x,y\rangle \ge 0\},
$$
see e.g. \cite[Theorem 2.13]{MR2317344}. The choice of $C^\prime$ is also unique in the following sense: 

\begin{thm}\label{thm:dualdualcone}
Let $(X,Y)$ be a dual pair and 
pick a weak-closed convex cone $C\subseteq X$ and a weak$^\star$-closed convex cone $D\subseteq Y$. 
Then $C=D^\prime$ if and only if $D=C^\prime$. 
\end{thm}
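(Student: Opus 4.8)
The plan is to reduce the statement to the self-duality (bipolar) property of weak-closed convex cones recalled just above, applied on \emph{both} sides of the dual pair. The one point that deserves an explicit remark is that if $(X,Y)$ is a dual pair via $\langle\cdot,\cdot\rangle$, then $(Y,X)$ is again a dual pair via the transposed bilinear form, with $Y$ carrying $\sigma(Y,X)$ and $X$ carrying $\sigma(X,Y)$. Consequently the cited fact that every weak-closed convex cone in $X$ is self-dual applies verbatim to weak$^\star$-closed convex cones in $Y$: for our cone $D\subseteq Y$ we have $D=D^{\prime\prime}$, and for our cone $C\subseteq X$ we have $C=C^{\prime\prime}$. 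Also recall that, for any nonempty $S\subseteq X$, the set $S^\prime$ is automatically a weak$^\star$-closed convex cone in $Y$, and symmetrically $T^\prime$ is a weak-closed convex cone in $X$ for $T\subseteq Y$; this guarantees that the hypotheses of the bipolar theorem are met at each application below.

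For the forward implication, I would assume $C=D^\prime$ and take dual cones of both sides, obtaining $C^\prime=(D^\prime)^\prime=D^{\prime\prime}$. Since $D$ is a weak$^\star$-closed convex cone, $D^{\prime\prime}=D$, hence $C^\prime=D$.

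For the converse, I would assume $D=C^\prime$ and take dual cones, obtaining $D^\prime=(C^\prime)^\prime=C^{\prime\prime}$. Since $C$ is a weak-closed convex cone, $C^{\prime\prime}=C$, hence $D^\prime=C$.

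I do not anticipate a genuine obstacle here: the argument is a two-line application of the bipolar theorem in each direction. The only step requiring a moment of care is the symmetry observation above — namely, that the self-duality result quoted for closed convex cones in $X$ transfers to weak$^\star$-closed convex cones in $Y$ by swapping the roles of the two spaces in the dual pair — together with checking that $C^\prime$ and $D^\prime$ are already cones of the right (closed, convex) type so that the bipolar identity is legitimately invoked.
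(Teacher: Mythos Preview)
Your argument is correct. The paper itself does not give a proof but simply cites \cite[Theorem 3.8]{LeoPrin22}, so there is no in-paper argument to compare against at the level of technique. Your route---apply the bipolar identity $K=K^{\prime\prime}$ on each side of the dual pair, after noting that $(Y,X)$ is itself a dual pair so that the self-duality fact quoted in the paper for weak-closed convex cones in $X$ applies symmetrically to weak$^\star$-closed convex cones in $Y$---is the natural two-line proof and uses only material already recalled in Section~\ref{sec:proofs}. In that sense your proposal is more self-contained than the paper's treatment.
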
 
\begin{proof}
See \cite[Theorem 3.8]{LeoPrin22}. 
\end{proof}

Hereafter, we use $\overline{\mathrm{co}}$ for the closed convex hull operator, where the closure is taken in the corresponding weak-topology of the underlying space, and $\mathrm{cone}(S)$ for the smallest cone containing $S\subseteq X$, that is, $\{\lambda x: \lambda>0, x \in S\}$. 

Analogously to Definition \ref{def:preference}, a binary relation $\succsim$ on a vector space $X$ satisfies the independence axiom if \eqref{eq:indepednetdef} holds for all $p,q,r \in X$ and $\alpha \in (0,1)$. 
\begin{defi}\label{defi:continuitypreferences}
Let $(X,Y)$ be a dual pair. A binary relation $\succsim$ on $X$ is \emph{continuous} if, for all weak-convergent nets $(p_i)_{i \in I}$ and $(q_i)_{i \in I}$ in $X$ with limits $p$ and $q$, respectively, then $p\succsim q$ whenever $p_i\succsim q_i$ for all $i \in I$. If the same property holds for sequences, in place of nets, then $\succsim$ is \emph{sequentially continuous}. 
\end{defi}

With these premises, we have the following representation of weak-closed convex cones. This is mainly a reference result, since most equivalences can be found in the literature (or used implicitly therein). 
\begin{thm}\label{thm:closedconesdecisiontheory}
Let $(X,Y)$ be a dual pair, $\succsim$ be a reflexive binary relation on $X$ which satisfies the independence axiom, and define %the cone
\begin{equation}\label{eq:definitionC}
C:=\{\lambda (p-q): \lambda \ge 0, \,\, p,q \in X, \text{ and }\,p\succsim q\}.
\end{equation}
Consider the following properties\emph{:}
\begin{enumerate}[label={\rm (\roman{*})}]
\item \label{S1:item} $C$ is a weak-closed convex cone\emph{;}

\item \label{S2:item} $C$ is a $\tau$-closed convex cone for some consistent topology\footnote{Recall that two topologies on a vector space are said to be consistent if they induce the same topological duals.} $\tau$ on $X$\emph{;}

\item \label{S3:item} for each nonempty $U\subseteq Y$, 
$$
C=U^\prime \quad \Longleftrightarrow \quad 
\overline{\mathrm{co}}(\mathrm{cone}(U))=C^{\prime}\emph{;}
$$

\item \label{S4:item} $C=C^{\prime\prime}$\emph{;}

\item \label{S5:item} $C=U^\prime$ for some nonempty $U\subseteq Y$\emph{;}

\item \label{S6:item} for each nonempty $U\subseteq Y$, the equivalence 
$$
\forall p,q \in X, \quad \,\,\,
p\succsim q 
\,\,\,\Longleftrightarrow\,\,\,
\langle p,y\rangle \ge \langle q,y\rangle \text{ for all }y \in U
$$
holds if and only if $\overline{\mathrm{co}}(\mathrm{cone}(U))=C^{\prime}$\emph{;}

\item \label{S7:item} it holds
$$
\forall p,q \in X, \quad \,\,\,
p\succsim q 
\,\,\,\Longleftrightarrow\,\,\,
\langle p,y\rangle \ge \langle q,y\rangle \text{ for all }y \in C^\prime\emph{;}
$$

\item \label{S8:item} there exists a nonempty $U\subseteq Y$ such that
$$
\forall p,q \in X, \quad \,\,\,
p\succsim q 
\,\,\,\Longleftrightarrow\,\,\,
\langle p,y\rangle \ge \langle q,y\rangle \text{ for all }y \in U\emph{;}
$$

\item \label{S9:item} $\succsim$ is transitive and continuous\emph{;}

\item \label{S10:item} $\succsim$ is transitive and sequentially continuous\emph{;}

\item \label{S11:item} $C$ is a sequentially weak-closed convex cone\emph{.}
\end{enumerate}
Then 
the following implications hold\emph{:}

\begin{figure}[!htb]
\centering
\begin{tikzpicture}
[scale=2.2]%,description/.style={fill=white, inner sep=2pt}]
\node (a1) at (0,1){\ref{S7:item}};
\node (a2) at (0.705,0.705){\ref{S8:item}};
\node (a3) at (1,0){\ref{S1:item}};
\node (a4) at (0.705,-0.705){\ref{S2:item}};
\node (a5) at (0,-1){\ref{S3:item}};
\node (a6) at (-0.705,-0.705){\ref{S4:item}};
\node (a7) at (-1,0){\ref{S5:item}};
\node (a8) at (-0.705,0.705){\ref{S6:item}};

\node (b12) at (.35,1-.1){\rotatebox[origin=c]{-20}{$\Longleftrightarrow$}};
\node (b18) at (-.35,1-.1){\rotatebox[origin=c]{20}{$\Longleftrightarrow$}};
\node (b56) at (-.35,-1+.1){\rotatebox[origin=c]{-20}{$\Longleftrightarrow$}};
\node (b54) at (.35,-1+.1){\rotatebox[origin=c]{20}{$\Longleftrightarrow$}};
\node (b23) at (1-.1, .35){\rotatebox[origin=c]{-70}{$\Longleftrightarrow$}};
\node (b34) at (1-.1, -.35){\rotatebox[origin=c]{70}{$\Longleftrightarrow$}};
\node (b67) at (-1+.1, .35){\rotatebox[origin=c]{70}{$\Longleftrightarrow$}};
\node (b78) at (-1+.1, -.35){\rotatebox[origin=c]{-70}{$\Longleftrightarrow$}};

\node (a9) at (2.3,.7){\ref{S9:item}}; 
\node (a10) at (3.6,0){\ref{S10:item}}; 
\node (a11) at (2.3,-.6){\ref{S11:item}}; 

\draw[-implies,double equal sign distance] (a3) -- (a9);
\draw[-implies,double equal sign distance] (a9) -- (a10);
\draw[-implies,double equal sign distance] (a3) -- (a11);
\end{tikzpicture}
\end{figure}
\end{thm}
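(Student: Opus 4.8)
The plan is to establish the eight-way equivalence among \ref{S1:item}--\ref{S8:item} first, since that ring is essentially standard duality theory once the convex-cone structure of $C$ is in place, and only then to add the three one-way implications \ref{S1:item}$\Rightarrow$\ref{S9:item}$\Rightarrow$\ref{S10:item} and \ref{S1:item}$\Rightarrow$\ref{S11:item}, which are easy. The very first thing I would check — and this is a genuine prerequisite used silently throughout — is that $C$ as defined in \eqref{eq:definitionC} is always a convex cone: conicity is immediate from the $\lambda\ge 0$ in the definition, and convexity is exactly where the independence axiom and reflexivity are needed (given $p\succsim q$ and $p'\succsim q'$, the independence axiom yields $\tfrac12 p+\tfrac12 p'\succsim \tfrac12 q+\tfrac12 p'\succsim \tfrac12 q+\tfrac12 q'$, and then transitivity would be needed to close the loop — but here we do \emph{not} have transitivity, so one must instead argue directly that $(p-q)+(p'-q')$ lies in $C$ by writing it as a positive multiple of a single difference $r\succsim s$, using independence to move both arguments by the same lottery). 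So the careful point in the preamble is: $C$ is a convex cone containing $0$ (reflexivity gives $p\succsim p$, hence $0\in C$), using \emph{only} reflexivity and independence — no transitivity. Once that is settled, the topological/order content of the theorem separates cleanly.

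For the core equivalences I would organize them around $C^{\prime\prime}$. The implication \ref{S1:item}$\Rightarrow$\ref{S4:item} is the bipolar theorem for weak-closed convex cones (cited as \cite[Theorem 2.13]{MR2317344}), and \ref{S4:item}$\Rightarrow$\ref{S1:item} is trivial since any double dual $C^{\prime\prime}$ is weak-closed. The equivalence \ref{S1:item}$\Leftrightarrow$\ref{S2:item} is the Mackey--Arens observation that the collection of weak-closed convex sets does not change under passage to any consistent topology (convex sets have the same closure in all consistent topologies). For \ref{S4:item}$\Leftrightarrow$\ref{S5:item}: \ref{S4:item}$\Rightarrow$\ref{S5:item} take $U=C'$; \ref{S5:item}$\Rightarrow$\ref{S4:item}, if $C=U'$ then $C$ is a dual cone hence weak-closed convex, so \ref{S1:item} holds and we are back. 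The heart is \ref{S3:item}: the forward direction of the stated biconditional, "$C=U'\Rightarrow \overline{\mathrm{co}}(\mathrm{cone}(U))=C'$", is precisely Theorem \ref{thm:dualdualcone} applied with $D=\overline{\mathrm{co}}(\mathrm{cone}(U))$ (noting $U'=(\overline{\mathrm{co}}(\mathrm{cone}(U)))'$ and that $C'$ is automatically a weak$^\star$-closed convex cone), and the reverse direction "$\overline{\mathrm{co}}(\mathrm{cone}(U))=C'\Rightarrow C=U'$" is again Theorem \ref{thm:dualdualcone} together with $C=C^{\prime\prime}$ from \ref{S4:item}. So \ref{S3:item} follows once one has \ref{S4:item}, and conversely taking $U=C'$ in \ref{S3:item} forces $C=C^{\prime\prime}$; this gives \ref{S3:item}$\Leftrightarrow$\ref{S4:item}. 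Finally \ref{S6:item}, \ref{S7:item}, \ref{S8:item} are the "preference" restatements: the key dictionary lemma is that for any nonempty $U\subseteq Y$, the representation "$p\succsim q\Leftrightarrow \langle p,y\rangle\ge\langle q,y\rangle\ \forall y\in U$" holds \emph{if and only if} $C=U'$ — the $\Leftarrow$ is a direct unwinding of the definition of $C$ and $U'$, and the $\Rightarrow$ uses that $p\succsim q\Leftrightarrow p-q\in C$ \emph{when such a representation exists} (the representation forces $\succsim$ to be translation-invariant and to depend only on $p-q$). With that lemma, \ref{S6:item} is \ref{S3:item} transported across the dictionary, \ref{S8:item} is \ref{S5:item} transported, and \ref{S7:item} is the $U=C'$ instance of \ref{S8:item} combined with $C=C^{\prime\prime}$; so all of \ref{S6:item}, \ref{S7:item}, \ref{S8:item} fold into the equivalence class. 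This matches the figure, which draws \ref{S7:item}$\Leftrightarrow$\ref{S8:item}$\Leftrightarrow$\ref{S1:item}$\Leftrightarrow$\ref{S2:item}$\Leftrightarrow$\ref{S3:item}$\Leftrightarrow$\ref{S4:item}$\Leftrightarrow$\ref{S5:item}$\Leftrightarrow$\ref{S6:item} around the octagon.

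The three remaining arrows are short. For \ref{S1:item}$\Rightarrow$\ref{S9:item}: assuming $C$ is a weak-closed convex cone, transitivity of $\succsim$ follows because $p\succsim q$ and $q\succsim r$ give $p-q,q-r\in C$, hence $p-r\in C$ by convexity-of-cone (closure under addition), and — using the dictionary, $C=U'$ with $U=C'$ — this yields $p\succsim r$; continuity follows because $\{(p,q): p\succsim q\}=\{(p,q): p-q\in C\}$ is the preimage of the weak-closed set $C$ under the weak-continuous map $(p,q)\mapsto p-q$, hence weak-closed, which is exactly net-continuity in the sense of Definition \ref{defi:continuitypreferences}. Then \ref{S9:item}$\Rightarrow$\ref{S10:item} is immediate since sequences are a special case of nets. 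And \ref{S1:item}$\Rightarrow$\ref{S11:item} is immediate since every weak-closed set is sequentially weak-closed. I expect the main obstacle to be purely expository: being precise about which properties ($\succsim$ translation-invariant / depends only on differences) are \emph{hypotheses} in \ref{S6:item}--\ref{S8:item} versus \emph{consequences}, and making sure the preamble establishes that $C$ is a convex cone using reflexivity and independence \emph{alone}, so that the later transitivity in \ref{S9:item}--\ref{S10:item} is genuinely a conclusion rather than something smuggled in; everything topological is off-the-shelf bipolar-theorem material.
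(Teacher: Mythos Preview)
Your overall architecture is correct and essentially mirrors the paper's: establish the ring \ref{S1:item}--\ref{S8:item} via Mackey's theorem, the bipolar theorem, and Theorem~\ref{thm:dualdualcone}, translate \ref{S3:item}/\ref{S4:item}/\ref{S5:item} to \ref{S6:item}/\ref{S7:item}/\ref{S8:item} through the dictionary ``$p\succsim q\Leftrightarrow p-q\in C$'', and then dispatch the one-way arrows. The paper derives \ref{S9:item} from \ref{S8:item} rather than from \ref{S1:item}, but since these are already shown equivalent this is only a cosmetic difference.

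There is, however, a genuine error in your preamble. You assert that $C$ is always a \emph{convex} cone using only reflexivity and independence, and propose a workaround for the missing transitivity (``argue directly that $(p-q)+(p'-q')$ lies in $C$ by writing it as a positive multiple of a single difference $r\succsim s$''). No such workaround exists: take $X=\mathbf{R}^2$ and declare $p\succsim q$ iff $p-q$ lies on one of the two nonnegative coordinate half-axes; this relation is reflexive and satisfies the independence axiom (the set of differences is a cone), yet $C$ equals the union of the two half-axes and is not convex. What \emph{is} true from reflexivity and independence alone is that $C$ is a (pointed) cone and that $C=\{p-q:p\succsim q\}$, which is precisely the dictionary you need; the biconditional form of independence gives closure under all positive scalars, and reflexivity gives $0\in C$. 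Convexity is \emph{not} a silent prerequisite: it is part of the hypothesis in \ref{S1:item}, \ref{S2:item}, \ref{S11:item}, and it is a consequence of any of \ref{S3:item}--\ref{S8:item} because dual cones are automatically convex. Once you drop the false preamble claim and keep only the cone/dictionary facts, every subsequent step in your outline goes through unchanged --- in particular, your argument for \ref{S1:item}$\Rightarrow$\ref{S9:item} is fine, since there you correctly invoke convexity as given by \ref{S1:item}.
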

\begin{proof}
\ref{S1:item} $\Leftrightarrow$ \ref{S2:item}. By Mackey's theorem, see e.g. \cite[Theorem 8.9]{MR2317344}, all consistent topologies on $X$ share the same closed convex sets. 

\medskip

\ref{S1:item} $\Rightarrow$ \ref{S3:item}. Each section of the duality map $\langle \cdot,\cdot\rangle$ is continuous, hence $U^\prime=(\mathrm{cone}(U))^\prime=D^\prime$, where $D:=\overline{\mathrm{co}}(\mathrm{cone}(U))$ is a weak$^\star$-closed convex cone. The claim follows by Theorem \ref{thm:dualdualcone}.

\medskip 

\ref{S3:item} $\Rightarrow$
\ref{S4:item} $\Rightarrow$
\ref{S5:item} $\Rightarrow$ 
\ref{S1:item}. 
They are obvious.

\medskip

\ref{S3:item} $\Leftrightarrow$ \ref{S6:item}, 
\ref{S4:item} $\Leftrightarrow$ \ref{S7:item}, and 
\ref{S5:item} $\Leftrightarrow$ \ref{S8:item}. 
Considering that $\succsim$ is a reflexive binary relation which satisfies the independence axiom, these equivalences follow from the fact that $p\succsim q$ if and only if $p-q \in C$.

\medskip

To sum up, we have the equivalences \ref{S1:item} $\Leftrightarrow$ 
\ref{S2:item} $\Leftrightarrow$ 
\ref{S3:item} $\Leftrightarrow$ 
\ref{S4:item} $\Leftrightarrow$ 
\ref{S5:item} $\Leftrightarrow$ 
\ref{S6:item} $\Leftrightarrow$ 
\ref{S7:item} $\Leftrightarrow$ 
\ref{S8:item}. 
The implications \ref{S9:item} $\Rightarrow$ \ref{S10:item} and \ref{S1:item} $\Rightarrow$ \ref{S11:item} are obvious. 

\medskip

\ref{S8:item} $\Rightarrow$ \ref{S9:item}. 
Pick two convergent nets $(p_i)_{i \in I}$ and $(q_i)_{i \in I}$ in $X$ with limits $p$ and $q$, respectively, such that $p_i \succsim q_i$ for all $i \in I$. 
Fix also $y \in U$. 
Then, by hypothesis, 
$\langle p_i,y\rangle \ge \langle q_i,y\rangle$ for all $i \in I$ and, by the continuity of the duality map, $\langle p,y\rangle \ge \langle q,y\rangle$. 
By the arbitrariness of $y$, it follows that the binary relation $\succsim$ is continuous. 
It is routine to check that the preorder $\succsim$ is also satisfies also the independence axiom. This concludes the proof. 
\end{proof}

Theorem \ref{thm:closedconesdecisiontheory} can be seen as the underlying tool in the literature on expected multi-utility theorems. 
%To list some important examples: 
%\begin{enumerate}[label={\rm (\alph{*})}]
%\item 
For instance, the original characterization by Dubra, Maccheroni, and Ok \cite{Dubra_et_al} considers the dual pair $(X,Y)$ where $X=\mathrm{ca}(Z)$ is the vector space of countably additive finite signed Borel measures on a compact metric space $Z$ and $Y$ is the vector space of continuous functions on $Z$. In fact, once $X$ is equipped with the total variation norm, 
%thanks to the Riesz--Radon representation theorem, 
its topological dual is isometrically isomorphic to $Y$, cf. also \cite[Appendix A.1]{MR3957335}. 
%\item kjhg
%\end{enumerate} 
(A similar remark applies to Evren's extension for preferences on Borel probability measures with compact support over a $\sigma$-compact metric space, see \cite{MR2398816} for details.)

Another important example is the recent article of Ok and Weaver \cite{OkWeaver}, where the authors prove an expected multi-utility theorem where the set $U$ is made by Lipschitz continuous utility functions. Here, assuming that $Z$ is a separable metric space, they use the dual pair $(X,Y)$, where $X$ stands for the Kantorovich–Rubinstein space $\mathrm{KR}(Z)$ and $Y=\mathrm{Lip}_0(Z)$ for the set of all Lipschitz continuous functions on $Z$ that vanish at a given point $z_0 \in Z$, see \cite[Theorem 2.1 and Theorem 4.1]{OkWeaver}. Also, the uniqueness part of their characterization, which is \cite[Theorem 5.1]{OkWeaver}, is recovered by the equivalence \ref{S6:item} $\Longleftrightarrow$ \ref{S8:item} in Theorem \ref{thm:closedconesdecisiontheory}. A similar comment applies to the Anscombe--Aumann analogue \cite[Theorem 4.2]{OkWeaver}, cf. also \cite[Proposition 4]{Macch} (we omit details).

Considering that, at the end of the story, one would like to obtain an expected multi-utility representation as in \ref{S8:item}, the \textquotedblleft technical difficulty\textquotedblright\, reduces to prove condition \ref{S1:item}, that is, the set $C$ defined in \eqref{eq:definitionC} is a weak-closed convex cone in $X$. In fact, this has been the strategy of the proofs in the above examples, see \cite[Claim 1]{Dubra_et_al} and \cite[Lemma 4.2]{OkWeaver}, respectively.

It is also worth to remark that the missing implications in the statement of Theorem \ref{thm:closedconesdecisiontheory} do not hold in general. 
E.g., the implication \ref{S11:item} $\implies$ \ref{S1:item} does not hold even if $C$ is a vector subspace of $X$. For, it is known that $\ell_1$ is a dense sequentially weak-closed subspace of the topological dual $\ell_\infty^\prime$. As a related result, considering the dual pair $(X,X^\prime)$, it is also known \emph{every} real normed vector space $X$ admits a sequentially weak-closed subset which is not weak-closed, cf. \cite[Proposition 3.2]{MR3511120}. 
On the other hand, it is remarkable that the Krein--\u{S}mulian theorem states that every sequentially weak$^\star$-closed convex set in the dual of a separable normed space is weak$^\star$-closed, cf. 
\cite[Corollary 2.7.13]{MR1650235}. Related results in dual Banach spaces can be found in \cite{MR3944288, MR3754364, MR3349508}, see also references therein.

\section{Cones of Lotteries}\label{sec:coneslotteries}

In this section, we are going to provide, in the special case of binary relations $\succsim$ on the set of lotteries $\Delta$, several sufficient conditions for the converse missing implications in Theorem \ref{thm:closedconesdecisiontheory}. 

For, fix hereafter an outcome set $Z$ with $|Z|\ge 2$, and endow $\mathbf{R}_0^Z$ 
and all its subsets 
with the order structure induced by the product pointwise order $\le$, so that 
$$
%\forall x,y \in \mathbf{R}_0^Z, \quad \quad 
x\le y
\quad \Longleftrightarrow \quad
x(z)\le y(z) \,\,\text{ for all }z \in Z,
$$
for all $x,y \in \mathbf{R}_0^Z$. Hence, the pair $(\mathbf{R}_0^Z, \le)$ is a Dedekind complete Riesz space, see e.g. \cite[p. 129]{MR2317344}. 
Note that the space $\mathbf{R}_0^Z$ can also be endowed with a norm $\|\cdot\|$ defined by 
\begin{equation}\label{eq:normdef}
\|x\|:=\sum\nolimits_{z \in Z}|x(z)|
\end{equation}
for all $x \in \mathbf{R}_0^Z$. 
Accordingly, a lottery $p$ is simply a norm-one vector in the positive cone 
of $\mathbf{R}_0^Z$, i.e., 
$$
\Delta=\{x \in \mathbf{R}_0^Z: \|x\|=1 \text{ and }x(z)\ge 0 \text{ for all }z \in Z\}.
$$
For each $u: Z \to \mathbf{R}$ and $S\subseteq Z$, we denote the support of $u$ by $\mathrm{supp}(u):=\{z \in Z: u(z)\neq 0\}$, and we write $u\upharpoonright S$ for the function $Z\to \mathbf{R}$ defined by
\begin{displaymath}
\forall z \in Z, \quad 
(u\upharpoonright S)(z)=
\begin{cases}
\, u(z) \,\,& \text{ if }z \in S,\\
\, 0 & \text{ otherwise}. 
\end{cases}
\end{displaymath}
Two lotteries $p,q \in \Delta$ are called \emph{orthogonal} if they have disjoint support, which can be rewritten as $|p| \wedge |q|=0$. 
Define the vector space $\Sigma$ of all $x \in \mathbf{R}_0^Z$ for which $\sum\nolimits_z x(z)=0$, or, equivalently,
$$
\Sigma:=\mathrm{span}(\Delta-\Delta).
$$
Lastly, recall that $e \in \mathbf{R}^Z$ stands for the constant function $1$ and, for each $z \in Z$, denote by $e_z$ the unique lottery supported on $z$. 

\begin{rmk}\label{rmk:dualpairconstants}
Let $Z$ be a nonempty set and $(\mathbf{R}_0^Z, W)$ be a dual pair where $W\subseteq \mathbf{R}^Z$ is a vector space containing $e$. Let also $\Theta$ be the vector subspace of $W$ containing all constant utility functions, that is, 
$
\Theta:=\{\theta e: \theta \in\mathbf{R}\}.
$
Accordingly, it is routine to check that 
%We claim that 
$$
\left(\Sigma, \,\,\frac{W}{\Theta}\right)
$$
is another dual pair. % (we omit the details).
(To this aim, note that $\mathbf{R}_0^Z$ and $W$ are isomorphic to the direct sums $\Sigma \oplus \mathbf{R}$ and $(W/\Theta) \oplus \Theta$, respectively; in addition, $\Theta$ is the annihilator of $\Sigma$, hence the topological dual of $\Sigma$ is isomorphic to $W/\Theta$.)
%by the linear bijection $f: \mathbf{R}_0^Z\to  \Sigma \oplus \mathbf{R}$ defined by $f(0)=(0,0)$ and $$f(x)=(x-\theta_x e\upharpoonright \mathrm{supp}(x), \theta_x)$$ for all nonzero $x \in \mathbf{R}_0^Z$, where $\theta_x:=\sum_{z \in Z}x(z)/|\mathrm{supp}(x)|$. Similarly, $W$ can be decomposed as $(W/\Theta) \oplus \Theta$. }

It is also worth noting that, for each nonempty $U,V\subseteq W/\Theta$, the condition $\overline{\mathrm{co}}(\mathrm{cone}(U))=\overline{\mathrm{co}}(\mathrm{cone}(V))$ in $W/\Theta$ reduces to $\langle \, U\,\rangle=\langle \, V\,\rangle$ in $W$.
\end{rmk}

In the next results, we will need the following definition: 
\begin{defi}\label{def:Lproperty}
A vector space $W\subseteq \mathbf{R}^Z$ has the (\textsc{L})\emph{-property} if $e \in W$ and 
\begin{equation}\label{eq:Lpropertydefinition}
\forall S\subseteq Z, \quad 
u \in W 
\,\,\implies \,\,
u\upharpoonright S \in W.
\end{equation}
\end{defi}

Examples of vector spaces with the (\textsc{L})-property include:
\begin{list}{$\diamond$}{}
\item the whole space $\mathbf{R}^Z$;
\item the space $\ell_\infty^Z$ of bounded functions;
\item the space $B_0(Z)$ of finitely-valued functions;
\item the space $B_\omega(Z)$ of countably-valued functions;
\item the space $B_{0,\omega}(Z)$ of functions $u \in \mathbf{R}^Z$ for which there exists a countable $S\subseteq Z$ such that $u \upharpoonright S \in B_0(Z)$; 
\item given a free ultrafilter $\mathscr{F}$ on $Z$, the space $\ell_\infty(\mathscr{F})$ of all $u \in \mathbf{R}^Z$ such that 
$\{z \in Z: |u(z)|\le k\}\in \mathscr{F}$ for some $k \in \mathbf{R}$; 
\item countable direct sums of vector spaces with the (\textsc{L})-property, provided that $Z$ is infinite.
\end{list}
Note that, if $W$ has the (\textsc{L})\emph{-property}, then $e_z \in W$ for all $z \in Z$. Hence $W$ separates the points of $\mathbf{R}_0^Z$, so that 
$$
(\mathbf{R}_0^Z,W)
$$
is a dual pair, with duality map $\langle x,u\rangle:=\mathbf{E}_x[u]$. 

%\textcolor{red}{As suggested by a referee, 
It is worth to remark that the condition \eqref{eq:Lpropertydefinition} 
defining the 
%in the definition of vectors spaces $W\subseteq \mathbf{R}^Z$ with the 
(\textsc{L})-property has an intuitive interpretation as a rationality type criterion. Indeed, the contuinuity of preferences in the dual pair $(\mathbf{R}_0^Z, W)$, see Definition \ref{defi:continuitypreferences}, imposes that the decision maker considers closeness of lotteries $(p_i)_{i \in I}$ to a lottery $p$ as indistinguishably according to the test functions in $W$: the property that $W$ is closed under multiplication with all indicator functions can be interpreted as the decision maker's ability to realize the same closeness for \emph{all} conditional lotteries. A bit more precisely, if $\lim_i\mathbf{E}_{p_i}[u] \to \mathbf{E}_p[u]$ and $S$ is a nonempty subset of $\mathrm{supp}(p)$, then $\lim_i\mathbf{E}_{p_i}[u\upharpoonright S] \to \mathbf{E}_p[u\upharpoonright S]$ and each term in the latter limit can be rewritten (at least, definitively and up to constants) as $\mathbf{E}_q[u]$ for some suitable conditional lottery $q$.

With these premises, we start with two easy lemmas: 
\begin{lem}\label{lem:decomposition}
For each $x \in \Sigma$, there exist $\alpha\ge 0$ and orthogonal lotteries $p,q \in \Delta$ such that 
$$
x=\alpha(p-q).
$$
In addition, such decomposition is unique whenever $x\neq 0$.
\end{lem}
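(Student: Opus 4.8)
The plan is to use the Riesz space structure of $(\mathbf{R}_0^Z, \le)$, specifically the Jordan-type decomposition of a vector into its positive and negative parts. Given $x \in \Sigma$, write $x = x^+ - x^-$, where $x^+ := x \vee 0$ and $x^- := (-x) \vee 0$ are the positive and negative parts of $x$; since $\mathbf{R}_0^Z$ is a Riesz space, both $x^+$ and $x^-$ lie in $\mathbf{R}_0^Z$, are nonnegative, and satisfy $x^+ \wedge x^- = 0$, i.e. they have disjoint support. First I would dispose of the trivial case $x = 0$ by taking $\alpha = 0$ and any pair of orthogonal lotteries (note the constant function $1$ over a singleton and over a disjoint singleton exist because $|Z| \ge 2$). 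For $x \neq 0$, the condition $\sum_z x(z) = 0$ forces $\sum_z x^+(z) = \sum_z x^-(z)$; call this common value $\alpha$. Since $x \neq 0$, at least one of $x^+, x^-$ is nonzero, and equality of the sums then forces $\alpha > 0$. Now set $p := \alpha^{-1} x^+$ and $q := \alpha^{-1} x^-$. Each is a nonnegative finitely-supported function summing to $1$, hence a lottery, they inherit disjoint support from $x^+ \wedge x^- = 0$, and $\alpha(p - q) = x^+ - x^- = x$, as required.

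For uniqueness when $x \neq 0$, suppose $x = \alpha(p - q) = \beta(p' - q')$ with $\alpha, \beta \ge 0$ and $p \perp q$, $p' \perp q'$. Since $x \neq 0$ we must have $\alpha, \beta > 0$. The point is that orthogonality pins down the positive and negative parts: because $p$ and $q$ have disjoint support, $(\alpha(p-q))^+ = \alpha p$ and $(\alpha(p-q))^- = \alpha q$, and similarly for the primed data. Hence $\alpha p = x^+ = \beta p'$ and $\alpha q = x^- = \beta q'$. Taking $\|\cdot\|$-norms (equivalently, summing over $z$, using nonnegativity) gives $\alpha = \alpha\|p\| = \beta\|p'\| = \beta$, and then $p = p'$ and $q = q'$.

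The only genuinely delicate point is verifying that taking the positive/negative part respects orthogonality in the way claimed, i.e. that when $p$ and $q$ are orthogonal lotteries and $\alpha > 0$, the lattice operations recover $\alpha p$ and $\alpha q$ from $\alpha(p-q)$; this is immediate pointwise since on $\mathrm{supp}(p)$ the difference equals $\alpha p(z) > 0$ and on $\mathrm{supp}(q)$ it equals $-\alpha q(z) < 0$, while elsewhere it vanishes. Everything else is a routine unwinding of definitions, so I expect no real obstacle; the lemma is essentially the observation that $\Sigma$ is the set of differences of scaled lotteries, made precise via the canonical Jordan decomposition in the Riesz space $\mathbf{R}_0^Z$.
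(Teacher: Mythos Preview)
Your proof is correct and follows essentially the same approach as the paper: both use the Riesz decomposition $x = x^{+} - x^{-}$, set $\alpha := \sum_{z} x^{+}(z)$, and normalize to obtain the orthogonal lotteries $p = x^{+}/\alpha$ and $q = x^{-}/\alpha$. Your treatment of uniqueness is in fact more explicit than the paper's, which simply invokes the uniqueness of the orthogonal decomposition $x = x^{+} - x^{-}$ by reference.
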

\begin{proof}
If $x=0$, then $x=0(\delta_{z_1}-\delta_{z_2})$, where $z_1,z_2 \in Z$ are two distinct outcomes (note that it is possible since $|Z|\ge 2$). Hence, suppose hereafter that $x\neq 0$. Set $\alpha:=\sum\nolimits_{z \in Z}x^+(z)$, which is positive since $x$ is a nonzero element of $\Sigma$. The claim follows by the uniqueness of the decomposition $x=x^+-x^-$ in orthogonal vectors, see e.g. \cite[Chapter 1]{MR2011364}, and by setting $p:=x^+/\alpha$ and $q:=x^-/\alpha$. 
\end{proof}

Hereafter, if $\succsim$ is a binary relation over the set of lotteries $\Delta$, then it is also regarded as a binary relation over $\Sigma$ (or, more  generally, over any superset of $\Delta$), with the convention that $x\succsim y$ does not hold if $x \notin \Delta$ or $y \notin \Delta$.
\begin{lem}\label{lem:disjointsupports}
Let $\succsim$ be a reflexive binary relation on $\Delta$ which satisfies the independence axiom, and define the cone $C$ as in \eqref{eq:definitionC} with $X=\Sigma$. Then, for each nonzero $x \in C$, there exist a unique $\alpha>0$ and unique orthogonal lotteries $p,q \in \Delta$ such that 
\begin{equation}\label{eq:claimdecompositionlotteries}
x=\alpha (p-q) \quad \text{ and }\quad p\succsim q.
\end{equation}
\end{lem}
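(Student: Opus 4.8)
The plan is to deduce everything from the orthogonal decomposition already available in Lemma \ref{lem:decomposition}, and to use the independence axiom to transport $\succsim$-comparability from $p_0,q_0$ to the canonical orthogonal pair. First I would unwind the definition of $C$: since $\succsim$ is a relation on $\Delta$ (with the convention that $x\succsim y$ fails unless both $x,y\in\Delta$), the set $C$ defined by \eqref{eq:definitionC} with $X=\Sigma$ is exactly $\{\lambda(p-q):\lambda\ge 0,\ p,q\in\Delta,\ p\succsim q\}$. Hence a nonzero $x\in C$ admits a representation $x=\lambda(p_0-q_0)$ with $\lambda>0$, lotteries $p_0,q_0\in\Delta$, $p_0\succsim q_0$, and necessarily $p_0\ne q_0$. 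Since $x\in\Sigma\setminus\{0\}$, Lemma \ref{lem:decomposition} supplies a \emph{unique} $\alpha>0$ and \emph{unique} orthogonal lotteries $p,q$ with $x=\alpha(p-q)$. Uniqueness in the statement is then free of charge: any triple $(\alpha',p',q')$ with $\alpha'>0$, with $p',q'$ orthogonal lotteries and $p'\succsim q'$, satisfying $x=\alpha'(p'-q')$, must coincide with $(\alpha,p,q)$ by the uniqueness clause of Lemma \ref{lem:decomposition}. So the whole content of the lemma reduces to proving that the orthogonal pair $p,q$ produced by Lemma \ref{lem:decomposition} satisfies $p\succsim q$.

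To establish $p\succsim q$, set $\beta:=\alpha/\lambda>0$, so that $p_0-q_0=\beta(p-q)$ in $\mathbf{R}_0^Z$, equivalently $p_0+\beta q=q_0+\beta p$. Both sides are nonnegative and have total mass $1+\beta$, so with $\mu:=1/(1+\beta)\in(0,1)$ we obtain a common lottery $h:=\mu p_0+(1-\mu)q=\mu q_0+(1-\mu)p$. Applying the independence axiom to $p_0\succsim q_0$ with mixing weight $\mu$ and reference lottery $q$ yields $\mu p_0+(1-\mu)q\succsim\mu q_0+(1-\mu)q$; rewriting the left-hand side as $h=\mu q_0+(1-\mu)p$ gives $\mu q_0+(1-\mu)p\succsim\mu q_0+(1-\mu)q$. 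A second application of the independence axiom, this time in the cancellation direction, peels off the common summand $\mu q_0$ with mixing weight $1-\mu\in(0,1)$ and delivers $p\succsim q$, establishing \eqref{eq:claimdecompositionlotteries}.

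I do not expect a genuine obstacle here: once $C$ is written out explicitly, the argument is pure bookkeeping with the independence axiom on top of Lemma \ref{lem:decomposition}. The only points that need a little care are checking that every scalar involved really lies in $(0,1)$ — which is precisely why $x\ne 0$ (hence $\beta>0$, hence $\mu\in(0,1)$) is needed — that every vector fed to the independence axiom is indeed a lottery, and that the convention ``$x\succsim y$ fails off $\Delta$'' is invoked only to identify $C$ with the set displayed above.
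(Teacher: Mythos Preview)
Your proof is correct and follows the same overall strategy as the paper: obtain the orthogonal pair from Lemma~\ref{lem:decomposition}, then use the independence axiom to transfer the relation $\succsim$ to that pair, with uniqueness coming for free from the lemma.

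The execution differs slightly. The paper works inside the Riesz structure: it identifies the common part $r$ of the original (non-orthogonal) lotteries as a rescaling of $p\wedge q$, writes both lotteries as the same $\kappa$-mixture of the orthogonal pair with $r$, and applies independence \emph{once} to cancel $r$. You instead exploit the algebraic identity $\mu p_0+(1-\mu)q=\mu q_0+(1-\mu)p$ and apply independence \emph{twice}, first mixing in $q$ and then cancelling $q_0$. Your route avoids computing $\kappa$ and $p\wedge q$ explicitly and is arguably cleaner bookkeeping; the paper's route makes the geometry (common part vs.\ orthogonal residuals) more visible and needs only a single invocation of the axiom. Both are equally short and rigorous.
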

\begin{proof}
If $C=\{0\}$ the claim holds. 
Otherwise, fix a nonzero vector $x \in C$. 
By the definition of $X$ and since $\succsim$ can be regarded as a subset of $\Delta^2$, there exist a real $\alpha>0$ and lotteries $p,q \in \Delta$ which satisfy \eqref{eq:claimdecompositionlotteries}. 
Note that, however, $p$ and $q$ are not necessarily orthogonal. 

Hence, assume hereafter that $p$ and $q$ are not orthogonal, so that $p\neq q$ and $p\neq (p-q)^+$. 
The positive part $(p-q)^+:=(p-q) \vee 0$ and the negative part $(p-q)^-:=(q-p) \vee 0$ are both nonzero, which implies that $\kappa:=\sum\nolimits_z (p-q)^+(z)$ is a real in the open interval $(0,1)$. 
At this point, define the lotteries 
$$
a:=\frac{(p-q)^+}{\kappa}
\quad 	\text{ and }\quad 
b:=\frac{(p-q)^-}{\kappa}.
$$
Note that 
$
p
%=(p_i+q_i)-q_i=(p_i\vee q_i)+(p_i\wedge q_i)-q_i
=(p-q)^++(p \wedge q)=\kappa a+(1-\kappa)r,
$ 
where $r$ is the well-defined lottery $(p\wedge q)/\kappa \in \Delta$. With an analogous identity for $q$, we obtain 
$$
\kappa a+(1-\kappa)r=p\succsim q=\kappa b+(1-\kappa)r.
$$
Since $\succsim$ satisfies the independence axiom, we obtain that $a \succsim b$ and, by construction, $a$ and $b$ are orthogonal and $x=\lambda (a-b)$, with $\lambda:=\alpha \kappa$. The uniqueness of the representation follows by Lemma \ref{lem:decomposition}.
\end{proof}

The converse implications that we are going to prove are drawn in the dashed arrows of Figure \ref{fig:converseimplications} below. 

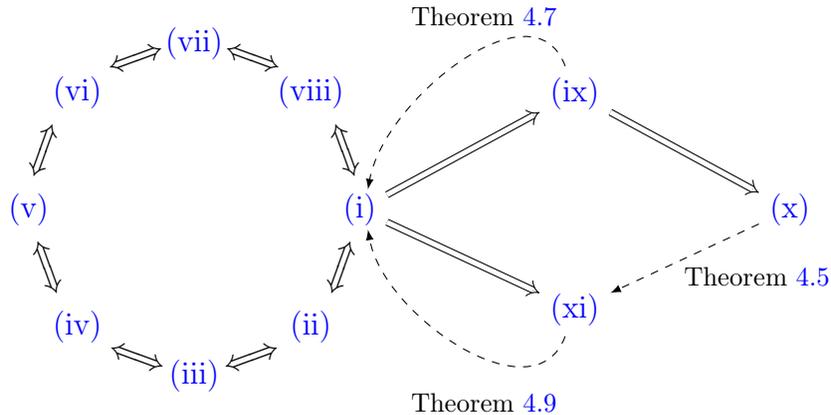
\begin{figure}[!htbp]
\centering
\begin{tikzpicture}
[scale=2.2]
\node (a1) at (0,1){\ref{S7:item}};
\node (a2) at (0.705,0.705){\ref{S8:item}};
\node (a3) at (1,0){\ref{S1:item}};
\node (a4) at (0.705,-0.705){\ref{S2:item}};
\node (a5) at (0,-1){\ref{S3:item}};
\node (a6) at (-0.705,-0.705){\ref{S4:item}};
\node (a7) at (-1,0){\ref{S5:item}};
\node (a8) at (-0.705,0.705){\ref{S6:item}};

\node (b12) at (.35,1-.1){\rotatebox[origin=c]{-20}{$\Longleftrightarrow$}};
\node (b18) at (-.35,1-.1){\rotatebox[origin=c]{20}{$\Longleftrightarrow$}};
\node (b56) at (-.35,-1+.1){\rotatebox[origin=c]{-20}{$\Longleftrightarrow$}};
\node (b54) at (.35,-1+.1){\rotatebox[origin=c]{20}{$\Longleftrightarrow$}};
\node (b23) at (1-.1, .35){\rotatebox[origin=c]{-70}{$\Longleftrightarrow$}};
\node (b34) at (1-.1, -.35){\rotatebox[origin=c]{70}{$\Longleftrightarrow$}};
\node (b67) at (-1+.1, .35){\rotatebox[origin=c]{70}{$\Longleftrightarrow$}};
\node (b78) at (-1+.1, -.35){\rotatebox[origin=c]{-70}{$\Longleftrightarrow$}};

\node (a9) at (2.3,.7){\ref{S9:item}}; 
\node (a10) at (3.6,0){\ref{S10:item}}; 
\node (a11) at (2.3,-.6){\ref{S11:item}}; 

\draw[-implies,double equal sign distance] (a3) -- (a9);
\draw[-implies,double equal sign distance] (a9) -- (a10);
\draw[-implies,double equal sign distance] (a3) -- (a11);

\draw[-latex, dashed, thin] (a10) to (a11);
\draw[-latex, dashed, thin] (a11) to [out=-110,in=-80] (1.05,-.12); 
\draw[-latex, dashed, thin] (a9) to [out=110,in=80] (1.05,.12); 

\node (a10a11) at (3.4,-.4){{\footnotesize Theorem \ref{thm:a10a11}}};
\node (a11sotto) at (1.75,-1.16){{\footnotesize Theorem \ref{thm:a11a1}}};
\node (a9sopra) at (1.75,1.16){{\footnotesize Theorem \ref{thm:a9a1}}};
\end{tikzpicture}
\caption{Converse implications for binary relations on $\Delta$.  \label{fig:converseimplications}}
\end{figure}

\begin{thm}\label{thm:a10a11}
Let $W$ be a vector space with the \textup{(}\textsc{L}\textup{)}-property and consider the dual pair $(\mathbf{R}_0^Z, W)$. 
Also, let $\succsim$ be a reflexive binary relation on $\Delta$ satisfying the independence axiom, and define the cone $C$ as in \eqref{eq:definitionC} with $X=\mathbf{R}_0^Z$. 
Then 
$$
\ref{S10:item} \implies \ref{S11:item}.
$$
\end{thm}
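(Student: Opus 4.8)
The goal is to upgrade sequential weak-closedness of $C$ to honest (net) weak-closedness, using the (\textsc{L})-property of $W$. The plan is to pass to the subspace $\Sigma$ via Remark \ref{rmk:dualpairconstants}: the cone $C$ lives in $\Sigma=\mathrm{span}(\Delta-\Delta)$, and by Lemma \ref{lem:disjointsupports} every nonzero $x \in C$ has a unique \emph{orthogonal} decomposition $x=\alpha(p-q)$ with $p\succsim q$. The key structural fact to exploit is that $\Sigma$ is a $1$-norm space whose unit ball has a rich lattice structure, so that weak-closedness can in fact be tested on sequences once one controls norms. Concretely, I would argue: to show $C$ is weak-closed, by the Krein--\v Smulian-type philosophy it suffices to show that $C \cap \{x : \|x\|\le r\}$ is weak-closed for every $r>0$ (using that $\mathbf{R}_0^Z$, with the $1$-norm, is such that bounded weak-closedness implies weak-closedness — this is where the (\textsc{L})-property enters, since it guarantees enough functionals); and on norm-bounded sets the weak topology $\sigma(\mathbf{R}_0^Z, W)$ is metrizable when $Z$ is handled appropriately, so bounded weak-closedness reduces to sequential weak-closedness, which is hypothesis \ref{S10:item} (via \ref{S11:item}, already known from \ref{S10:item} by Theorem \ref{thm:closedconesdecisiontheory}).

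More carefully, here are the steps in order. First, reduce to $\Sigma$: since $C \subseteq \Sigma$ and $\Sigma$ is weak-closed in $\mathbf{R}_0^Z$ (it is the annihilator of $\Theta$), it is enough to prove $C$ is closed in $(\Sigma, \sigma(\Sigma, W/\Theta))$. Second, take a net $(x_i)$ in $C$ weak-converging to $x \in \Sigma$; I must produce a \emph{sequence} in $C$ converging to $x$, or directly show $x \in C$. The standard device: a weak-convergent net is weakly bounded, hence norm-bounded (by the uniform boundedness principle applied in the dual pairing — here one uses that $W \supseteq B_0(Z)$, which follows from the (\textsc{L})-property since $e_z = e\upharpoonright\{z\} \in W$, so all point evaluations and hence all finitely-supported functionals are available; a weakly bounded set in $\mathbf{R}_0^Z$ that is tested against all $e_z$ is pointwise bounded, and combined with the net lying in $C$ where the orthogonal decomposition controls the full $1$-norm by a single functional value, one gets a uniform norm bound $\|x_i\|\le M$). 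Third, on the norm-ball $M B_{\mathbf R_0^Z}$, observe that each $x_i$ has support inside a \emph{fixed} countable set is \emph{not} automatic — but here is the real point: I would instead use that $x=\lim_i x_i$ has some finite support $F$, split $x_i = x_i\upharpoonright F' + \text{rest}$ for growing finite $F'$, and use the (\textsc{L})-property to conditionalize. Fourth, having reduced to a situation living on a countable outcome set, the weak topology restricted to the norm-ball is metrizable (countably many coordinate functionals suffice), so the net has a convergent \emph{subsequence-like} cofinal sequence landing in $C$ by \ref{S11:item}; conclude $x \in C$.

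The main obstacle — and the step that needs the (\textsc{L})-property in an essential way rather than cosmetically — is the passage from a norm-bounded weak-convergent \emph{net} to a weak-convergent \emph{sequence} with the same limit lying in $C$. Nets in $\mathbf{R}_0^Z$ can range over supports of unbounded cardinality, so one cannot naively metrize. The (\textsc{L})-property is what lets the decision maker (equivalently, the dual pairing) "see" each conditional lottery $x_i\upharpoonright S$ as again a valid element testable by $W$, so that the limit's finite support $F$ can be used to chop each $x_i$ into a part that weak-converges to $x$ within a metrizable bounded piece and a remainder that, via the orthogonal decomposition of Lemma \ref{lem:disjointsupports} and the $1$-norm bound, is forced to vanish in the limit; the chopped parts lie in $C$ (again by independence plus the conditional-lottery rescaling, exactly as in the proof of Lemma \ref{lem:disjointsupports}), so \ref{S11:item} applies to them and delivers $x \in C$. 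I expect the bookkeeping of "chop, rescale to a genuine lottery, apply independence, take the metrizable sequential limit, add the pieces back" to be the technical heart; everything before it (reduction to $\Sigma$, norm-boundedness, availability of $B_0(Z)\subseteq W$) is routine dual-pair bookkeeping.
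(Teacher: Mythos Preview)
You have misread the statement. Item \ref{S10:item} is ``$\succsim$ is transitive and sequentially continuous'' and item \ref{S11:item} is ``$C$ is a \emph{sequentially} weak-closed convex cone.'' The theorem asks you to pass from sequential continuity of the relation $\succsim$ on $\Delta$ to sequential weak-closedness of the cone $C$ in $\mathbf{R}_0^Z$ --- it does \emph{not} ask you to upgrade sequential closedness of $C$ to full (net) weak-closedness. Your sentence ``sequential weak-closedness, which is hypothesis \ref{S10:item} (via \ref{S11:item}, already known from \ref{S10:item} by Theorem~\ref{thm:closedconesdecisiontheory})'' is circular: the implication \ref{S10:item} $\Rightarrow$ \ref{S11:item} is precisely the content of the present theorem, and it is \emph{not} among the arrows of Theorem~\ref{thm:closedconesdecisiontheory}.

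Worse, the statement you are actually trying to prove --- that \ref{S10:item} (or \ref{S11:item}) implies \ref{S1:item} for an arbitrary outcome set $Z$ --- is \emph{false}: Theorem~\ref{thm:mainnegativecharacterization} constructs, for every uncountable $Z$, a sequentially weak-closed convex cone in $\Sigma$ that is not weak-closed. So no Krein--\v{S}mulian-type device, no metrizability-on-bounded-sets argument, and no ``chop and conditionalize'' trick can possibly succeed here; the implication \ref{S11:item} $\Rightarrow$ \ref{S1:item} is handled separately in Theorem~\ref{thm:a11a1} and requires $Z$ countable.

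What the paper actually does is the following. Take a sequence $(x_n)$ in $C$ weak-converging to $x_0\neq 0$. By Lemma~\ref{lem:disjointsupports}, write $x_n=\alpha_n(p_n-q_n)$ and $x_0=\alpha_0(p_0-q_0)$ with orthogonal lotteries and $p_n\succsim q_n$. The entire difficulty is to show that $(p_n)$ and $(q_n)$ weak-converge to $p_0$ and $q_0$ separately; once that is done, sequential continuity of $\succsim$ gives $p_0\succsim q_0$ and hence $x_0\in C$. Pointwise convergence on $\mathrm{supp}(x_0)$ is easy, but off that support the positive and negative parts of $x_n$ could carry large cancelling mass. The (\textsc{L})-property is used in an essential, constructive way: assuming $\langle \alpha_n p_n, u_0\rangle$ fails to converge for some $u_0\in W$ supported off $\mathrm{supp}(x_0)$, one builds by hand a function $u_1\in W$ (doubling $u_0$ on carefully chosen pieces of the supports $P_{n_{k_m}}$, which stays in $W$ precisely because $W$ is closed under restriction) for which $\langle x_n, u_1\rangle$ does not converge to $\langle x_0,u_1\rangle$, a contradiction. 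This combinatorial construction is the technical heart, and nothing in your outline addresses it.
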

\begin{proof}
Let $(x_n)_{n\ge 1}$ be a sequence in $C$ which is weak-convergent to some $x_0 \in \mathbf{R}_0^Z$. Since $e \in W$ then $\Sigma$ is weak-closed, so $x_0 \in \Sigma$. To complete the proof, we need to show that $x_0 \in C$. If $x_0=0$ the claim holds, otherwise suppose hereafter that $x_0\neq 0$ and, without loss of generality, $x_n\neq 0$ for all $n \ge 1$ (since the weak-topology is Hausdorff). 

It follows by Lemma \ref{lem:disjointsupports} that there exist unique positive reals $(\alpha_n)_{n\ge 0}$ and unique pairs of orthogonal lotteries $((p_n,q_n))_{n\ge 0}$ such that 
$$
\forall n\ge 1, \quad \,\,\,
p_n \succsim q_n \,\,\text{ and }\,\, x_n=\alpha_n(p_n-q_n).
%x=\alpha(p-q),\,\,
%x_1=\alpha_1(p_1-q_1),\,\,\, 
%x_2=\alpha_2(p_2-q_2), \,\,\, \ldots
$$
and 
$$
x_0=\alpha_0(p_0-q_0).
$$
Define $P_n:=\mathrm{supp}(p_n)$, $Q_n:=\mathrm{supp}(Q_n)$ and $S_n:=P_n\cup Q_n$ for all $n\ge 0$. 
By construction, we have $P_n\cap Q_n=\emptyset$. 
Note that $\langle x_n, e_z\rangle=x_n(z)$ for each $z \in Z$, hence the hypothesis that $(x_n)_{n\ge 1}$ is weak-convergent to $x_0$ implies 
\begin{equation}\label{eq:pointwiselimit}
\forall z \in Z, \quad 
%\lim_{n\to \infty}\langle x_n,e_z\rangle=
\lim_{n\to \infty}x_n(z)=x_0(z). 
\end{equation}

\begin{claim}\label{claim1}
$\lim_{n}\alpha_np_n(z)=\alpha_0p_0(z)$ for all $z \in P_0$. 
\end{claim}
\begin{proof}
Pick $z \in P_0$. Since $p_0$ and $q_0$ are orthogonal, for each $\varepsilon \in (0,\alpha_0 p_0(z))$ there exists $n_\varepsilon \ge 1$ such that
$$
\forall n\ge n_\varepsilon, \quad 
\left|\alpha_n(p_n-q_n)(z)-\alpha_0 p_0(z)\right|<\varepsilon.
$$
However, the lotteries $p_n$ and $q_n$ are orthogonal, so $z \in P_n$ for all $n\ge n_\varepsilon$. 
By the arbitrariness of $\varepsilon$, we obtain $\lim_{n}\alpha_np_n(z)=\alpha_0p_0(z)$. 
\end{proof}

\begin{claim}\label{claim2}
The sequence $(\alpha_np_n)_{n\ge 1}$ is weak-convergent to $\alpha_0p_0$. 
\end{claim}
\begin{proof}
First of all, since $(x_n)_{n\ge 1}$ is weak-convergent to $x_0$, we have by definition $\lim_n\langle x_n-x_0,u\rangle=0$ for all $u \in W$, or equivalently
\begin{equation}\label{eq:alphapandq}
\forall u\in W, \quad \lim_{n\to \infty}\left(\langle \alpha_np_n-\alpha_0p_0,u\rangle - \langle \alpha_nq_n-\alpha_0q_0,u\rangle \right)=0.
\end{equation}
As it follows from \eqref{eq:pointwiselimit}, we have $\lim_n x_n(z)=0$ for all $z\in Z\setminus S_0$. 
For such values of $z$ we have also $\alpha_0p_0(z)=\alpha_0q_0(z)=0$, so that $\lim_n \alpha_n(p_n(z)-q_n(z))=0$ and, considering that $P_n\cap Q_n=\emptyset$, we have
\begin{equation}\label{eq:dkjhgkhrd}
\forall z \in Z\setminus S_0, \quad 
\lim_{n\to \infty} \,\langle \alpha_np_n, e_z\rangle
=\lim_{n\to \infty} \,\langle \alpha_nq_n, e_z\rangle
=0.
\end{equation}
Let us suppose for the sake of contradiction that 
$$
\limsup_{n\to \infty}\, |\langle \alpha_np_n-\alpha_0p_0,u_0\rangle| >0
$$
for some $u_0 \in W$. 
It follows by Claim \ref{claim1} that we can suppose without loss of generality that $\mathrm{supp}(u_0)\subseteq Z\setminus S_0$. In addition, rescaling $u_0$ if necessary and taking into account \eqref{eq:alphapandq}, there exists a strictly increasing sequence $(n_k)_{k\ge 1}$ of positive integers such that 
\begin{equation}\label{eq:dkjhgkhrd2}
\lim_{k\to \infty}\, \langle \alpha_{n_k}p_{n_k},u_0\rangle
=\lim_{k\to \infty}\, \langle \alpha_{n_k}q_{n_k},u_0\rangle
=1
\end{equation}
or 
\begin{equation}\label{eq:dkjhgkhrd3}
\lim_{k\to \infty}\, \langle \alpha_{n_k}p_{n_k},u_0\rangle
=\lim_{k\to \infty}\, \langle \alpha_{n_k}q_{n_k},u_0\rangle
=\infty.
\end{equation}
Note that \eqref{eq:dkjhgkhrd} implies that $u_0$ is not finitely-supported. 

\medskip

First, assume that \eqref{eq:dkjhgkhrd2} holds. We are going to define recursively a strictly increasing sequence $(k_m)_{m\ge 1}$ of positive integers as it follows: 
Set by convention $k_0:=0$ and $S_{n_{k_0}}:=\emptyset$ and, if all positive integers $k_1,\ldots,k_{m-1}$ have been defined, pick the smallest integer $k_m>k_{m-1}$ such that 
\begin{equation}\label{eq:uglycond1}
\max\left\{
\left|\langle \alpha_{n_{k_m}}p_{n_{k_m}},u_0\rangle-1\right|,
\left|\langle \alpha_{n_{k_m}}q_{n_{k_m}},u_0\rangle-1\right|
\right\}<2^{-m}
\end{equation}
and
$$
\forall n\ge n_{k_m}, \quad 
\max\{|\langle \alpha_np_n, v_m\rangle|,
|\langle \alpha_nq_n, v_m\rangle| \}<2^{-m},
$$
where $v_m \in \mathbf{R}^Z$ is the finitely-supported function defined by $v_m(z):=u_0(z)$ if $z \in S_{n_{k_1}}\cup \cdots \cup S_{n_{k_{m-1}}}$ and $v_m(z):=0$ otherwise. Note that such sequence is well defined, thanks to \eqref{eq:dkjhgkhrd} and \eqref{eq:dkjhgkhrd2}. 
Lastly, define the function $u_1 \in \mathbf{R}^Z$ by 
\begin{displaymath} 
u_1(z):=
\begin{cases}
\,2u_0(z)& \text{if }z \in P_{n_{k_m}}\setminus (S_{n_{k_1}}\cup \cdots \cup S_{n_{k_{m-1}}}) \text{ for some }m\ge 1,\\
\,u_0(z)& \text{otherwise}
\end{cases}
\end{displaymath}
for all $z \in Z$. 
Since $W$ is a vector space with the (\textsc{L})-property, $u_1 \in W$ and $v_m \in W$ for all $m\ge 1$. 
It follows by construction that, for all integers $m\ge 1$,
\begin{displaymath}
\begin{split}
\langle \alpha_{n_{k_m}}p_{n_{k_m}},u_1\rangle &=
\langle \alpha_{n_{k_m}}p_{n_{k_m}},u_0\rangle +\langle \alpha_{n_{k_m}}p_{n_{k_m}},u_0-v_m\rangle \\
&\ge (1-2^{-m})+(1-2^{-m}-2^{-m}) \ge 2-2^{2-m}
\end{split}
\end{displaymath}
and, on the other hand, 
\begin{displaymath}
\langle \alpha_{n_{k_m}}q_{n_{k_m}},u_1\rangle =
\langle \alpha_{n_{k_m}}q_{n_{k_m}},u_0\rangle \le 1+2^{-m}.
\end{displaymath}
Therefore $u_1$ is a function in $W$ with infinite support disjoint from $S_0$ and 
\begin{displaymath}
\begin{split}
\limsup_{n\to \infty}\,\langle x_n,u_1\rangle
&=\limsup_{n\to \infty}\, (\langle \alpha_np_n,u_1\rangle- \langle \alpha_nq_n,u_1\rangle) \\
&\ge \limsup_{m \to \infty}\, (\langle \alpha_{n_{k_m}}p_{n_{k_m}},u_1\rangle- \langle \alpha_{n_{k_m}}q_{n_{k_m}},u_1\rangle) 
\ge 1,
\end{split}
\end{displaymath}
which contradicts \eqref{eq:alphapandq}.

\medskip

In the second case, i.e., assuming \eqref{eq:dkjhgkhrd3}, we can construct an analogous strictly increasing sequence $(k_m)_{m\ge 1}$ of positive integers replacing condition \eqref{eq:uglycond1} with 
$$
\langle \alpha_{n_{k_m}}p_{n_{k_m}},u_0\rangle \ge m 
\quad \text{ and }\quad 
|\langle \alpha_{n_{k_m}}p_{n_{k_m}},u_0\rangle-\langle \alpha_{n_{k_m}}q_{n_{k_m}},u_0\rangle|<2^{-m}
$$
for all $m\ge 1$.
Indeed, proceeding similarly, it would follow that 
\begin{displaymath}
\begin{split}
\langle \alpha_{n_{k_m}}p_{n_{k_m}},u_1\rangle&- \langle \alpha_{n_{k_m}}q_{n_{k_m}},u_1 \rangle
=  
\langle \alpha_{n_{k_m}}p_{n_{k_m}},2u_0-v_m\rangle- \langle \alpha_{n_{k_m}}q_{n_{k_m}},u_0\rangle \\
&\ge  
\langle \alpha_{n_{k_m}}p_{n_{k_m}},2u_0-v_m\rangle- \langle \alpha_{n_{k_m}}p_{n_{k_m}},u_0\rangle-2^{-m}\\
&=
\langle \alpha_{n_{k_m}}p_{n_{k_m}},u_0\rangle- \langle \alpha_{n_{k_m}}p_{n_{k_m}},v_m\rangle -2^{-m}\\
&\ge m-2^{1-m}
\end{split}
\end{displaymath}
for all $m\ge 1$. Since the support of $u_1$ has empty intersection with $S_0$, we reached another contradiction with \eqref{eq:alphapandq}, completing the proof of the claim. 
\end{proof}

\begin{claim}\label{claim3}
$\lim_{n}\alpha_n=\alpha_0$.
\end{claim}
\begin{proof} 
It follows by Claim \ref{claim2} since $e \in W$ and $\lim_n \langle \alpha_np_n,e\rangle=\langle \alpha_0p_0,e\rangle$. 
\end{proof}

\begin{claim}\label{claim4}
The sequence $(p_n)_{n\ge 1}$ is weak-convergent to $p_0$. 
\end{claim}
\begin{proof}
Fix $u \in W$ and a positive integer $n_0$ such that $|\alpha_n|\ge \frac{1}{2}|\alpha_0|$ for all $n\ge n_0$ which exists by Claim \ref{claim3}. Then, for all $n\ge n_0$, we have 
\begin{displaymath}
\begin{split}
|\langle p_n-p_0, u\rangle |
&=\left|\left\langle \frac{\alpha_np_n-\alpha_0p_0}{\alpha_n}+\frac{\alpha_0p_0}{\alpha_n}-\frac{\alpha_0p_0}{p_0}, u\right\rangle \right|\\
&\le 
\frac{1}{|\alpha_n|}\left|\left\langle \alpha_np_n-\alpha_0p_0,u\right\rangle \right|+\left|\frac{1}{\alpha_n}-\frac{1}{\alpha_0}\right|\cdot |\langle \alpha_0p_0,u\rangle|\\
&\le 
\frac{2}{|\alpha_0|}\left|\left\langle \alpha_np_n-\alpha_0p_0,u\right\rangle \right|+\frac{2|\alpha_n-\alpha_0|}{|\alpha_0|^2}|\langle \alpha_0p_0,u\rangle|.\\
\end{split}
\end{displaymath}
Taking the limit $n\to \infty$, it follows that $\lim_n \langle p_n,u\rangle=\langle p,u\rangle$.
\end{proof}

Reasoning analogously as in Claim \ref{claim4}, we obtain that the sequence $(q_n)_{n\ge 1}$ is weak-convergent to $q_0$. 
Together with the sequential continuity of $\succsim$, it follows that $p_0\succsim q_0$. Therefore $x_0=\alpha_0(p_0-q_0)$ belongs to $C$, concluding the proof. 
\end{proof}
%%%%%%%%%%%%%%%%%%%%%

At this point, we can state the following necessary and sufficient conditions for the existence of an expected multi-utility representation on the set of lotteries:
\begin{cor}\label{cor:necesuff}
Let $Z$ be a nonempty set and consider the dual pair $(\mathbf{R}_0^Z,W)$, where $W\subseteq \mathbf{R}^Z$ has the \textup{(}\textsc{L}\textup{)}-property. 
Then the following are equivalent\emph{:}
\begin{enumerate}[label={\rm (\alph{*})}]
\item \label{item:mainA} every sequentially weak-closed convex cone in $\Sigma$ is weak-closed\emph{;}
\item \label{item:mainB} for each sequentially continuous preorder $\succsim$ on the set of lotteries $\Delta$ which satisfies the independence axiom, there exists a nonempty set $U\subseteq W$ of utility functions $u:Z\to \mathbf{R}$ such that, for all lotteries $p,q \in \Delta$, 
$$
%\forall p,q \in \Delta, \quad 
p\succsim q 
\,\,\,\Longleftrightarrow\,\,\,
\mathbf{E}_p[u] \ge \mathbf{E}_q[u]
\,\text{ for all }u \in U\emph{;}
$$
\item \label{item:mainC} condition \ref{item:mainB} holds and, in addition, if $V\subseteq W$ is another nonempty set of utility functions with the same property then 
$\langle\,U\,\rangle=\langle\,V\,\rangle$.
%$\overline{\mathrm{co}}(\mathrm{cone}(U))=\overline{\mathrm{co}}(\mathrm{cone}(V))$.
\end{enumerate}
\end{cor}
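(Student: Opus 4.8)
The plan is to prove the cyclic chain \ref{item:mainA} $\Rightarrow$ \ref{item:mainC} $\Rightarrow$ \ref{item:mainB} $\Rightarrow$ \ref{item:mainA}, the implication \ref{item:mainC} $\Rightarrow$ \ref{item:mainB} being trivial since \ref{item:mainC} explicitly includes \ref{item:mainB}. Three preliminary reductions will be used throughout. First, $\Sigma$ is the kernel of the weak-continuous functional $\langle \cdot, e\rangle$, hence a weak-closed subspace of $\mathbf{R}_0^Z$ whose relative weak-topology is exactly $\sigma(\Sigma,W/\Theta)$, the dual pair of Remark \ref{rmk:dualpairconstants}; so for subsets of $\Sigma$ it is unambiguous to speak of being (sequentially) weak-closed, and likewise $\Delta$ is weak-closed since $W$ contains $e$ and all $e_z$. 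Second, two convex cones in $\Sigma$ that both contain $0$ and agree on the absorbing set $\Delta-\Delta=\{x\in\Sigma:\|x\|\le 2\}$ must coincide, because any nonzero $x\in\Sigma$ has the positive multiple $x/\|x\|$ inside $\Delta-\Delta$. Third, if $\succsim$ is a reflexive relation on $\Delta$ satisfying independence and $C$ is as in \eqref{eq:definitionC}, then $p\succsim q\Leftrightarrow p-q\in C$ for all \emph{lotteries} $p,q$ (the nontrivial direction is the orthogonal decomposition of Lemma \ref{lem:disjointsupports} followed by one application of independence); I will refer to this as the identification.

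For \ref{item:mainA} $\Rightarrow$ \ref{item:mainC}, let $\succsim$ be a sequentially continuous preorder on $\Delta$ satisfying independence and let $C$ be its cone. Being transitive and sequentially continuous — property \ref{S10:item} of Theorem \ref{thm:closedconesdecisiontheory} — it follows from Theorem \ref{thm:a10a11} (applicable because $W$ has the \textup{(}\textsc{L}\textup{)}-property) that $C$ is a sequentially weak-closed convex cone, hence weak-closed by \ref{item:mainA}, in $\Sigma$ and therefore in $\mathbf{R}_0^Z$. Self-duality of $C$ in $(\mathbf{R}_0^Z,W)$ gives $C=C''$, so with $U:=C'\subseteq W$ (nonempty, as $0\in C'$) the identification yields, for lotteries $p,q$, that $p\succsim q\Leftrightarrow p-q\in C\Leftrightarrow \mathbf{E}_p[u]\ge\mathbf{E}_q[u]$ for all $u\in U$; this is \ref{item:mainB}. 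For the uniqueness clause, if $V\subseteq W$ also represents $\succsim$, then the representation together with the identification gives $x\in C\Leftrightarrow \mathbf{E}_x[v]\ge 0$ for all $v\in V$, first for $x\in\Delta-\Delta$ and then, by the cone reduction, for all $x\in\Sigma$; the same holds with $U$ in place of $V$. Thus, writing $\bar U,\bar V$ for the images in $W/\Theta$ and working in $(\Sigma,W/\Theta)$, the weak$^\star$-closed convex cones $\overline{\mathrm{co}}(\mathrm{cone}(\bar U))$ and $\overline{\mathrm{co}}(\mathrm{cone}(\bar V))$ both have dual cone equal to $C$ (a dual cone is unchanged on passing to the generated closed convex cone), so Theorem \ref{thm:dualdualcone} forces both to equal $C'$; by Remark \ref{rmk:dualpairconstants} this says precisely $\langle U\rangle=\langle V\rangle$ in $W$.

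For \ref{item:mainB} $\Rightarrow$ \ref{item:mainA}, fix a sequentially weak-closed convex cone $K\subseteq\Sigma$; note $0\in K$ since $K$ is nonempty and $\lambda x\to 0$ weakly as $\lambda\downarrow 0$. Define $p\succsim q:\Leftrightarrow p-q\in K$ on $\Delta$. Then $\succsim$ is reflexive (as $0\in K$), transitive and independence-satisfying (as $K$ is a convex cone, hence stable under sums and positive rescalings), and sequentially continuous: if $p_n\to p$, $q_n\to q$ weakly with $p_n-q_n\in K$, then $p,q\in\Delta$ because $\Delta$ is weak-closed, and $p-q=\lim_n(p_n-q_n)\in K$ by sequential closedness. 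So \ref{item:mainB} provides a nonempty $U\subseteq W$ with $p\succsim q\Leftrightarrow \mathbf{E}_p[u]\ge\mathbf{E}_q[u]$ for all $u\in U$ and all lotteries $p,q$; unwinding the definition of $\succsim$, the sets $K$ and the dual cone $(\bar U)'$ computed in $(\Sigma,W/\Theta)$ are convex cones containing $0$ that agree on $\Delta-\Delta$, hence $K=(\bar U)'$, which is weak-closed. This is \ref{item:mainA}.

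The only genuinely topological ingredient is Theorem \ref{thm:a10a11}; everything else is the bookkeeping needed to reconcile the relation $\succsim$ — which is defined only on $\Delta$ and is reflexive on no strictly larger space — with the cone calculus of Section \ref{sec:proofs}, phrased for reflexive relations on whole vector spaces, and this is exactly what the identification (legitimate via Lemma \ref{lem:disjointsupports}) and the ``agreement on $\Delta-\Delta$'' principle accomplish. The point I expect to require the most care is the round trip through the quotient dual pair $(\Sigma,W/\Theta)$ in the uniqueness argument: one must check that the closed-convex-conic-hull identity in $W/\Theta$ corresponds faithfully, via Remark \ref{rmk:dualpairconstants}, to the essential-uniqueness statement $\langle U\rangle=\langle V\rangle$ in $W$.
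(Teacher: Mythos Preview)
Your proof is correct and follows essentially the same route as the paper's: the paper proves \ref{item:mainA} $\Rightarrow$ \ref{item:mainB} $\Leftrightarrow$ \ref{item:mainC} via Theorem~\ref{thm:a10a11} and the equivalences \ref{S1:item} $\Leftrightarrow$ \ref{S6:item} $\Leftrightarrow$ \ref{S8:item} of Theorem~\ref{thm:closedconesdecisiontheory} on the dual pair $(\Sigma,W/\Theta)$, and \ref{item:mainB} $\Rightarrow$ \ref{item:mainA} by defining $\succsim$ from the given cone and invoking \ref{S1:item} $\Leftrightarrow$ \ref{S8:item}. Your write-up is simply more explicit about the bookkeeping (the identification $p\succsim q\Leftrightarrow p-q\in C$ on $\Delta$ via Lemma~\ref{lem:disjointsupports}, and the ``agreement on $\Delta-\Delta$'' principle) that the paper leaves implicit when applying Theorem~\ref{thm:closedconesdecisiontheory}---which is stated for relations on the whole vector space---to relations defined only on $\Delta$.
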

\begin{proof}
\ref{item:mainA} $\implies$ \ref{item:mainB} $\Longleftrightarrow$ \ref{item:mainC}. Fix a sequentially continuous preorder $\succsim$ on $\Delta$ which satisfies the independence axiom, i.e., assume \ref{S10:item}. Thanks to 
%Theorem \ref{thm:a9a1}, 
Theorem \ref{thm:a10a11}, 
the cone $C$ defined in  \eqref{eq:definitionC} is a sequentially weak-closed convex cone in $\Sigma$. Hence, by the standing hypothesis \ref{item:mainA}, $C$ is necessarily weak-closed, i.e., \ref{S1:item} holds. The claim follows by equivalences \ref{S1:item} $\Longleftrightarrow$ \ref{S6:item} $\Longleftrightarrow$ \ref{S8:item} in Theorem \ref{thm:closedconesdecisiontheory} applied to the dual pair $(\Sigma, W/\Theta)$, cf. Remark \ref{rmk:dualpairconstants}.

\medskip

\ref{item:mainB} $\implies$ \ref{item:mainA}. 
%Assume for the sake of contradiction that \ref{item:mainA} does not hold and \ref{item:mainB} holds. 
%Hence there exists a sequentially weak-closed cone $C\subseteq \Sigma$ which is not weak-closed. 
Fix a sequentially weak-closed convex cone $C\subseteq \Sigma$ and let 
%Let 
$\succsim$ be the binary relation on $\Delta$ defined by $p \succsim q$ if and only if $p-q \in C$. Note that $\succsim$ satisfies the independence axioms (we omit details). Since $C$ is convex, then $\succsim$ is transitive, cf. \cite[Lemma A.2(d)]{MR3957335}. In addition, if $(p_n)_{n\ge 1}$ and $(q_n)_{n\ge 1}$ are sequences of lotteries which are weak-convergent to $p$ and $q$ in $\Delta$, respectively, and $p_n\succsim q_n$ for all $n\ge 1$, then necessarily $(p_n-q_n)_{n\ge 1}$ is weak-convergent by $p-q$. Since $p_n\succsim q_n$ is equivalent to $p_n-q_n \in C$, it follows by the sequential weak-closure of $C$ that $p-q \in C$, so that $p\succsim q$. To sum up, $\succsim$ is a sequentially continuous preorder satisfying the independence axiom. 
By condition \ref{item:mainB}, $\succsim$ satisfies \ref{S8:item}. 
Hence, the equivalence \ref{S1:item} $\Longleftrightarrow$ \ref{S8:item} given in Theorem \ref{thm:closedconesdecisiontheory} implies that $C$ has to be weak-closed. 
%had to be weak-closed. This is the claimed contradiction. 
\end{proof}

Note that, if the binary relation $\succsim$ is assumed to be continuous, as in \ref{S9:item} (and not only sequentially continuous, as in \ref{S10:item}), then every passage in the proof of Theorem~\ref{thm:a10a11} still works for nets in place of sequences, with the unique exception of Claim~\ref{claim2}. 
Indeed, it is not true, in general, that every net $(x_i)_{i \in I}$ admits a subsequence (i.e., a countable subnet). 
Hence, the construction of a subsequence $(x_{n_{k_m}})_{m\ge 1}$ such that $\limsup_m |\langle x_{n_{k_m}},u_0\rangle|> 0$ for some $u_0$ with support disjoint from $S_0$ does not necessarily imply a contradiction with the hypothesis $\lim_{i}\langle x_i,u_0\rangle =0$. 
This observation suggests the following result: 

\begin{thm}\label{thm:a9a1}
Let $Z$ be a nonempty set and $(\mathbf{R}_0^Z, W)$ be a dual pair where $W\subseteq \mathbf{R}^Z$ is a vector space such that $e \in W$ and the map $T: \mathbf{R}_0^Z\to \mathbf{R}_0^Z$ defined by $x\mapsto x^+$ 
is weak-to-weak continuous\emph{.} 

Also, let $\succsim$ be a reflexive binary relation on $\Delta$ which satisfies the independence axiom, and define the cone $C$ as in \eqref{eq:definitionC} with $X=\mathbf{R}_0^Z$. 
Then 
$$
\ref{S9:item} \implies \ref{S1:item}.
$$
\end{thm}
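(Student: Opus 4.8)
The plan is to repeat the proof of Theorem~\ref{thm:a10a11} with nets in place of sequences, using the hypothesis on $T$ to supply the one step that does not automatically transfer. As the remark preceding the statement points out, the only place in that proof which genuinely uses countability is Claim~\ref{claim2}, where a subsequence is extracted; every other step is already phrased for arbitrary nets. First I would record that $C$ is a convex cone: it is a cone by its very definition, and transitivity of $\succsim$ together with reflexivity and the independence axiom forces convexity (cf.\ \cite[Lemma~A.2(d)]{MR3957335}). Hence \ref{S1:item} reduces to showing that $C$ is weak-closed, i.e.\ that the weak-limit $x_0$ of an arbitrary weak-convergent net $(x_i)_{i\in I}$ in $C$ again lies in $C$. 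Since $e\in W$, the hyperplane $\Sigma$ is weak-closed, so $x_0\in\Sigma$; if $x_0=0$ then $x_0\in C$ by reflexivity of $\succsim$, and otherwise, because $(\mathbf{R}_0^Z,\sigma(\mathbf{R}_0^Z,W))$ is Hausdorff and $x_0\neq 0$, I may restrict to a cofinal subset of $I$ along which $x_i\neq 0$ for every $i$.

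Next, Lemma~\ref{lem:disjointsupports} yields unique positive reals $\alpha_i$ and unique pairs of orthogonal lotteries $(p_i,q_i)$ with $p_i\succsim q_i$ and $x_i=\alpha_i(p_i-q_i)$, and likewise $x_0=\alpha_0(p_0-q_0)$ with $\alpha_0>0$. The crucial observation is that, because $p_i$ and $q_i$ have disjoint supports, $x_i^+=\alpha_i p_i$ and $x_i^-=\alpha_i q_i$, and similarly $x_0^+=\alpha_0 p_0$ and $x_0^-=\alpha_0 q_0$. Therefore $T(x_i)=\alpha_i p_i$, and the weak-to-weak continuity of $T$ gives that $(\alpha_i p_i)_{i\in I}$ weak-converges to $T(x_0)=\alpha_0 p_0$; this is exactly the net analogue of Claim~\ref{claim2} (and, with it available directly, Claim~\ref{claim1} is no longer needed). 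Subtracting, $\alpha_i q_i=x_i^+-x_i=T(x_i)-x_i$ weak-converges to $T(x_0)-x_0=\alpha_0 q_0$, so no separate assumption on $x\mapsto x^-$ is required.

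With Claim~\ref{claim2} in hand, Claims~\ref{claim3} and~\ref{claim4} of Theorem~\ref{thm:a10a11} transfer word for word to nets. Pairing $\alpha_i p_i\to\alpha_0 p_0$ against $e\in W$ and using $\langle\alpha_i p_i,e\rangle=\alpha_i$ gives $\alpha_i\to\alpha_0$; since $\alpha_0>0$, the elementary estimate of Claim~\ref{claim4} then promotes the convergences $\alpha_i p_i\to\alpha_0 p_0$ and $\alpha_i\to\alpha_0$ to $p_i\to p_0$ weakly, and symmetrically $q_i\to q_0$ weakly. As $p_i\succsim q_i$ for all $i$ and $\succsim$ is continuous in the sense of Definition~\ref{defi:continuitypreferences}, we obtain $p_0\succsim q_0$, whence $x_0=\alpha_0(p_0-q_0)\in C$. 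This shows $C$ is a weak-closed convex cone, i.e.\ \ref{S1:item} holds.

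I do not anticipate a real obstacle here: the hard part of Theorem~\ref{thm:a10a11} — the recursive construction inside Claim~\ref{claim2}, which collapses for nets since a net need not contain a suitable subsequence — is sidestepped entirely once one notices the identity $x^+=\alpha p$ for $x=\alpha(p-q)$ with $p,q$ orthogonal and then invokes the continuity of $T$. The only items requiring a bit of care are the convexity of $C$ (which rests on transitivity) and the reduction to $x_i\neq 0$ via Hausdorffness, both already implicit in the proof of Theorem~\ref{thm:a10a11}.
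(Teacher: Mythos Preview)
Your proposal is correct and follows essentially the same route as the paper: the paper's proof simply says to repeat the argument of Theorem~\ref{thm:a10a11} with nets in place of sequences, replacing Claims~\ref{claim1} and~\ref{claim2} by the single observation that $\lim_i \alpha_i p_i=\lim_i Tx_i=T(\lim_i x_i)=\alpha_0 p_0$ via the weak-to-weak continuity of $T$. Your write-up makes explicit a few points the paper leaves implicit (convexity of $C$ from transitivity, the identity $x_i^+=\alpha_i p_i$ from orthogonality, and the fact that $\alpha_i q_i=T(x_i)-x_i$ handles the negative part without a separate hypothesis), but the strategy is identical.
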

\begin{proof} 
Note that $\Sigma$ is weak-closed since $e \in W$. 
The proof of Theorem \ref{thm:a9a1} goes verbatim as the proof of Theorem \ref{thm:a10a11}, replacing sequences $(x_n)_{n\ge 1}$ with nets $(x_i)_{i \in I}$ 
and noting, in place of Claim \ref{claim1} and Claim \ref{claim2}, that the 
weak-to-weak 
continuity of $T$ implies $\lim_i \alpha_ip_i=\lim_i Tx_i=T(\lim_i x_i)=\alpha_0p_0$.
\end{proof}

\begin{rmk}\label{rmk:consistentlocallysolid}
A sufficient condition to imply the continuity of $T$ is that 
there exists a consistent topology $\tau$ on $\mathbf{R}_0^Z$ which is 
locally solid, namely, it has local base $(U_\alpha)_{\alpha \in A}$ of neighborhoods at $0$ such that 
$$
\forall \alpha \in A, \forall p \in U_\alpha, \forall q \in \mathbf{R}_0^Z, \quad 
|q|\le |p| \implies q \in U_\alpha.
$$
Indeed, it follows by a characterization of Roberts and Namioka \cite[Theorem 2.17]{MR2011364} that $T$ is uniformly continuous. In particular, $T$ is $\tau$-to-$\tau$ continuous and, hence, also weak-to-weak continuous, cf. \cite[Theorem 6.17]{MR2378491}.

However, as it has been shown in \cite[Example 2.18]{MR2011364} there exists a topological Riesz space such that the map $x\mapsto x^+$ is continuous and, on the other hand, its topology is not locally solid. 
\end{rmk}

\begin{thm}\label{thm:a11a1}
Let $Z$ be a nonempty countable set.
$$
\ref{S11:item} \implies \ref{S1:item}.
$$
\end{thm}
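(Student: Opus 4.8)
The plan is to reduce, via Mackey's theorem, to showing that $C$ is closed in the finest locally convex topology $\tau$ on $\mathbf{R}_0^Z$, and then to exploit that — precisely because $Z$ is countable — this topology is a strict inductive limit of a sequence of finite-dimensional subspaces. When $Z$ is finite the statement is immediate, since then $w$ is the Euclidean topology on a finite-dimensional space, where sequential closedness and closedness coincide; so assume $Z$ is countably infinite, fix an enumeration $Z=\{z_1,z_2,\ldots\}$, and set $V_n:=\{x\in\mathbf{R}_0^Z:\mathrm{supp}(x)\subseteq\{z_1,\ldots,z_n\}\}$, so that $\mathbf{R}_0^Z=\bigcup_n V_n$ is an increasing union of finite-dimensional subspaces. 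Since $\mathbf{R}^Z$ is exactly the algebraic dual of $\mathbf{R}_0^Z$, the topology $\tau$ is consistent with $w$; hence, by Mackey's theorem (as invoked for \ref{S1:item} $\Leftrightarrow$ \ref{S2:item}), the $w$-closed convex subsets of $\mathbf{R}_0^Z$ are exactly the $\tau$-closed ones, and as $C$ is convex it suffices to prove that $C$ is $\tau$-closed. Moreover $(\mathbf{R}_0^Z,\tau)$ is the strict inductive limit of the chain $V_1\subseteq V_2\subseteq\cdots$, and $\tau$ restricts to the Euclidean topology on each $V_n$.

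The first step is a sublemma: every $w$-convergent sequence $(x_n)_n$ in $\mathbf{R}_0^Z$ lies eventually in some $V_N$, with limit $x_0\in V_N$. I would argue by contradiction. If no such $N$ exists, then, writing $y_n:=x_n-x_0$ (so $y_n\to 0$ in $w$, hence $y_n(z)\to 0$ for every $z\in Z$) and passing to a subsequence, I recursively choose indices $n_1<n_2<\cdots$, points $\zeta_k\in\mathrm{supp}(y_{n_k})$ lying outside $\mathrm{supp}(x_0)\cup\mathrm{supp}(y_{n_1})\cup\cdots\cup\mathrm{supp}(y_{n_{k-1}})$, and thresholds $\varepsilon_k>0$ chosen — after the weights $M_1,\ldots,M_{k-1}$ below are fixed — so small that $|y_n(\zeta_j)|<\varepsilon_k$ for all $j<k$ and all $n\ge n_k$. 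Then set $u:=\sum_k M_k e_{\zeta_k}\in\mathbf{R}^Z$, with $M_k>0$ large enough that $|y_{n_k}(\zeta_k)\,M_k|=k$ and $\varepsilon_k\sum_{j<k}M_j<1$. Since $\zeta_j\notin\mathrm{supp}(y_{n_k})$ for $j>k$ while $|y_{n_k}(\zeta_j)|<\varepsilon_k$ for $j<k$, this yields $|\langle y_{n_k},u\rangle|\ge k-1$ for every $k$, contradicting $\langle y_n,u\rangle\to 0$. It is essential here that the test function $u$ may be chosen freely in $\mathbf{R}^Z$: in general it is neither finitely supported nor bounded, which is why this step does not extend to coarser duality pairs such as $(\mathbf{R}_0^Z,\ell_\infty^Z)$.

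To conclude: if $(c_n)_n$ is a sequence in $C$ that $w$-converges to $x$, the sublemma places $(c_n)_n$ eventually in some $V_N$ with $x\in V_N$; since $w$ restricts to the Euclidean topology on $V_N$ and \ref{S11:item} says $C$ is $w$-sequentially closed, we get $x\in C\cap V_N$. So $C\cap V_N$ is sequentially closed, hence closed, in the metrizable space $V_N$, for every $N$. Invoking the standard fact that a convex subset of a strict inductive limit of a sequence of finite-dimensional (more generally, Fréchet) spaces is closed whenever its trace on each of the defining subspaces is closed, applied to $(\mathbf{R}_0^Z,\tau)$ and the convex set $C$, we conclude that $C$ is $\tau$-closed, hence $w$-closed, which is \ref{S1:item}. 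The delicate points are the bookkeeping in the sublemma — ordering the recursion so that each $\varepsilon_k$ (and thus $n_k$) controls the cross terms $\sum_{j<k}y_{n_k}(\zeta_j)M_j$ in terms of the already-chosen $M_1,\ldots,M_{k-1}$ while the new term $y_{n_k}(\zeta_k)M_k$ blows up, and checking that the $\zeta_k$ may be taken pairwise distinct and disjoint from $\mathrm{supp}(x_0)$ — together with the appeal to the inductive-limit criterion; it is exactly at the latter step that countability of $Z$ enters, consistently with the failure recorded in Theorem \ref{thm:mainnegativecharacterization} for uncountable $Z$.
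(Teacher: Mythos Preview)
Your argument is correct and follows essentially the same route as the paper: both reduce to the observation that $C\cap F$ is closed in every finite-dimensional subspace $F$ (immediate from \ref{S11:item}, since convergence in $F$ implies $w$-convergence) and then invoke the fact that this forces $C$ to be weak-closed when the ambient space has countable algebraic dimension --- the paper cites this as \cite[Proposition~8.5.28]{MR880207}, while you recover it via the strict inductive-limit description of the finest locally convex topology together with Mackey's theorem. One remark: your sublemma, though true and nicely argued, is not actually needed --- to show $C\cap V_N$ is sequentially closed in $V_N$ you should start with a sequence already in $C\cap V_N$, and then its $V_N$-limit is automatically a $w$-limit, so \ref{S11:item} applies directly without ever invoking that general $w$-convergent sequences are eventually finite-dimensional.
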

\begin{proof}
Let $C\subseteq \Sigma$ be a sequentially weak-closed convex cone. Then $C$ is weak-closed if $Z$ is finite. Otherwise, if $Z$ is countably infinite, $C\cap F$ is relatively closed in $F$ whenever $F$ is a finite dimensional subspace of $\Sigma$. 
It follows by \cite[Proposition 8.5.28]{MR880207} that $C$ is weak-closed, cf. also \cite[Corollary 2.1]{MR2070962}. 
\end{proof}

We conclude this section with an application which relies on the stronger hypothesis of \emph{continuous} preorders satisfying the independence axiom, in place of sequential continuity.  
For, given a nonempty set $Z$, let $\zeta$ be the topology on $\mathbf{R}_0^Z$ so that a local base of open neighboorhoods at $0$ is given by the family $\{U_h: h \in (0,\infty)^Z\}$, where 
$$
U_h:=\{x \in \mathbf{R}_0^Z: |x(z)|<h(z) \text{ for all }z \in Z\}.
$$
The topology $\zeta$ has been used, in the case $Z$ is countably infinite, by Kannai in \cite{MR170713} to solve an open question of Aumann \cite{MR174381}. As we are going to show in the proof below, the topological dual of $(\mathbf{R}_0^Z, \zeta)$ is precisely the vector space $c_\omega(Z)$ of countably-supported utility functions. Analogously, we endow $c_\omega(Z)$ with the weak-topology $\sigma(c_\omega(Z), \mathbf{R}_0^Z)$. 
\begin{thm}\label{thm:mainintermediate}
Let $Z$ be a nonempty set and $\succsim$ be a binary relation over $\Delta$. 
Then $\succsim$ is $\zeta$-continuous preorder which satisfies the independence axiom if and only if there exists a nonempty set $U\subseteq c_{\omega}(Z)$ of countably-supported utility functions such that \eqref{eq:claimedmultiutility} holds for all $p,q \in \Delta$. 

In addition, if $V\subseteq c_{\omega}(Z)$ is another nonempty set of countably-supported utility functions with the same property, then $\langle\, U\,\rangle=\langle\,V\,\rangle$.
%$\overline{\mathrm{co}}(\mathrm{cone}(U))=\overline{\mathrm{co}}(\mathrm{cone}(V))$. 
\end{thm}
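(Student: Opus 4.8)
The plan is to reduce Theorem~\ref{thm:mainintermediate} to the machinery already assembled, namely Theorem~\ref{thm:closedconesdecisiontheory}, Theorem~\ref{thm:a9a1}, and Remark~\ref{rmk:dualpairconstants}. First I would identify the topological dual of $(\mathbf{R}_0^Z,\zeta)$. The claim is that $(\mathbf{R}_0^Z,\zeta)^\prime=c_\omega(Z)$. One inclusion is easy: if $u$ has countable support $\{z_1,z_2,\ldots\}$, choosing $h\in(0,\infty)^Z$ with $h(z_k)$ small enough that $\sum_k |u(z_k)|h(z_k)<\varepsilon$ shows $x\mapsto\mathbf{E}_x[u]$ is $\zeta$-continuous at $0$, hence everywhere by linearity. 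For the converse, if a linear functional $\varphi$ is $\zeta$-continuous, it is bounded on some $U_h$; writing $\varphi$ in coordinates as $\varphi(x)=\sum_z u(z)x(z)$ (a finite sum, since $x\in\mathbf{R}_0^Z$), boundedness on $U_h$ forces $|u(z)|\le \|\varphi\|_{U_h}/h(z)<\infty$ for every $z$, which is automatic, so the real content is that the support of $u$ must be countable; this follows because $e_z/h(z)\cdot(h(z)/2)\in U_h$, and if uncountably many $u(z)$ were nonzero one could, by a pigeonhole argument on the sizes $|u(z)|h(z)$, build a point of $\mathbf{R}_0^Z$ inside a scaled $U_h$ on which $\varphi$ is unbounded --- this is exactly the argument Kannai uses in \cite{MR170713}, and I would cite it rather than reprove it in full.

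Having established $(\mathbf{R}_0^Z,\zeta)^\prime=c_\omega(Z)$, note that $W:=c_\omega(Z)$ is a vector space with the $(\textsc{L})$-property (it is one of the listed examples, being $B_{0,\omega}$-like: restricting a countably supported function to any $S$ keeps it countably supported, and $e\in c_\omega(Z)$ only if $Z$ is countable --- here one must be slightly careful). Actually $e\notin c_\omega(Z)$ when $Z$ is uncountable, so rather than invoke the $(\textsc{L})$-property verbatim I would work directly: $\zeta$ is a consistent topology for the dual pair $(\mathbf{R}_0^Z, c_\omega(Z)\oplus\Theta)$ where $\Theta=\{\theta e:\theta\in\mathbf{R}\}$, since adding constants to the dual does not change which functionals are $\zeta$-continuous on $\Sigma$; equivalently, pass at once to the dual pair $(\Sigma, c_\omega(Z)/\Theta)$ as in Remark~\ref{rmk:dualpairconstants}. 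The key technical point I then need is that the positive-part map $T:x\mapsto x^+$ on $\mathbf{R}_0^Z$ is $\zeta$-to-$\zeta$ continuous: this is immediate because $\zeta$ is manifestly \emph{locally solid} --- each $U_h$ satisfies $|q|\le|p|\in U_h\Rightarrow q\in U_h$ --- so by Remark~\ref{rmk:consistentlocallysolid} $T$ is uniformly $\zeta$-continuous, hence weak-to-weak continuous for the consistent weak topology $\sigma(\mathbf{R}_0^Z, c_\omega(Z)\oplus\Theta)$.

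With $T$ weak-to-weak continuous and $e$ in the (constant-augmented) dual, Theorem~\ref{thm:a9a1} applies and gives \ref{S9:item}$\implies$\ref{S1:item} for this dual pair. Combining with the trivial implication \ref{S1:item}$\implies$\ref{S9:item} from Theorem~\ref{thm:closedconesdecisiontheory}, we get that $\succsim$ is a $\zeta$-continuous preorder satisfying independence if and only if the cone $C$ of \eqref{eq:definitionC} (taken in $\Sigma$) is weak-closed, which by \ref{S1:item}$\Leftrightarrow$\ref{S6:item}$\Leftrightarrow$\ref{S8:item} is equivalent to the existence of $U\subseteq c_\omega(Z)$ with \eqref{eq:claimedmultiutility}, and the uniqueness $\langle U\rangle=\langle V\rangle$ follows from the same equivalence \ref{S6:item}$\Leftrightarrow$\ref{S8:item} together with the translation of $\overline{\mathrm{co}}(\mathrm{cone}(\cdot))$-equality in $c_\omega(Z)/\Theta$ into $\langle\cdot\rangle$-equality in $c_\omega(Z)$, recorded at the end of Remark~\ref{rmk:dualpairconstants}. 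The only genuine obstacle I anticipate is the dual computation $(\mathbf{R}_0^Z,\zeta)^\prime=c_\omega(Z)$ when $Z$ is uncountable --- specifically the countability-of-support argument --- and the bookkeeping around whether constants live in $c_\omega(Z)$; everything downstream is a clean application of the already-proved results, provided one consistently either augments the dual by $\Theta$ or descends to the pair $(\Sigma,c_\omega(Z)/\Theta)$ before invoking Theorem~\ref{thm:a9a1} and Theorem~\ref{thm:closedconesdecisiontheory}.
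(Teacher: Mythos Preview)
Your plan is essentially the paper's own proof: establish $(\mathbf{R}_0^Z,\zeta)^\prime=c_\omega(Z)$, observe that $\zeta$ is locally solid so $x\mapsto x^+$ is continuous via Remark~\ref{rmk:consistentlocallysolid}, and then invoke Theorem~\ref{thm:a9a1} together with Theorem~\ref{thm:closedconesdecisiontheory} and Remark~\ref{rmk:dualpairconstants}. The only cosmetic difference is that the paper writes out the dual computation in full (both directions of a Claim, including the pigeonhole argument you sketch) rather than citing Kannai.

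Your worry about $e\notin c_\omega(Z)$ for uncountable $Z$ is a genuine point that the paper simply does not address: it applies Theorem~\ref{thm:a9a1} with $W=c_\omega(Z)$ directly, even though that theorem's hypotheses ask for $e\in W$. However, your proposed repair is not correct as written. The claim that ``$\zeta$ is a consistent topology for the dual pair $(\mathbf{R}_0^Z,\,c_\omega(Z)\oplus\Theta)$'' is false, for exactly the reason you just proved: the $\zeta$-continuous linear functionals on $\mathbf{R}_0^Z$ are precisely $c_\omega(Z)$, and $e$ is not one of them when $Z$ is uncountable. The alternative you mention---descending to $(\Sigma,\,c_\omega(Z)/\Theta)$---is closer, but note that for uncountable $Z$ the only constant in $c_\omega(Z)$ is $0$, so this quotient is just $c_\omega(Z)$ acting on $\Sigma$; you would still need to rerun the net argument of Theorem~\ref{thm:a9a1} and, in particular, to obtain $\alpha_i\to\alpha_0$ (the analogue of Claim~\ref{claim3}) without appealing to $\langle\,\cdot\,,e\rangle$. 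That step is where the real work lies, and neither your proposal nor the paper supplies it.
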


Note that, in the special case where $Z$ is countable, Theorem \ref{thm:mainintermediate} gives another proof of Theorem \ref{thm:mainweaktopology}, although under the stronger hypothesis of $\zeta$-continuity of $\succsim$ (indeed, in such case, the topologies $\zeta$ and $w$ are consistent). At the same time, Theorem \ref{thm:mainintermediate} holds also if $Z$ is uncountable. 

\begin{proof}
[Proof of Theorem \ref{thm:mainintermediate}]
As anticipated, we start proving that the topological dual of $(\mathbf{R}_0^Z, \zeta)$ is precisely $c_\omega(Z)$:
\begin{claim}
A utility function $u \in\mathbf{R}^Z$ induces a $\zeta$-continuous linear functional $\mathbf{R}_0^Z\to \mathbf{R}$ through $x\mapsto \langle x,u\rangle$ if and only if $u$ is countably-supported. 
\end{claim}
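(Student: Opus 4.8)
The plan is to prove both directions of the claim directly from the definition of the topology $\zeta$.

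First I would prove the ``if'' direction. Suppose $u \in c_\omega(Z)$, say $\mathrm{supp}(u) = \{z_1, z_2, \ldots\}$ is countable (possibly finite). I want to show $x \mapsto \langle x, u \rangle = \mathbf{E}_x[u]$ is $\zeta$-continuous, and by linearity it suffices to exhibit a $\zeta$-neighborhood $U_h$ of $0$ on which $|\langle x, u \rangle| < 1$. Choose $h \in (0,\infty)^Z$ by setting $h(z_k) := 2^{-k}/(1 + |u(z_k)|)$ for each $k$ in the support, and $h(z) := 1$ (say) for $z \notin \mathrm{supp}(u)$. Then for $x \in U_h$ we have $|\langle x, u \rangle| = |\sum_k u(z_k) x(z_k)| \le \sum_k |u(z_k)| \cdot |x(z_k)| < \sum_k |u(z_k)| h(z_k) \le \sum_k 2^{-k} \le 1$. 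Since $x$ was an arbitrary element of $U_h$ and the sum over the support of $u$ is a genuine (finite or convergent) series, the functional is bounded on $U_h$, hence $\zeta$-continuous.

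For the ``only if'' direction, suppose $u \in \mathbf{R}^Z$ induces a $\zeta$-continuous linear functional. Then there is a basic neighborhood $U_h$ with $|\langle x, u\rangle| < 1$ for all $x \in U_h$. I claim this forces $\mathrm{supp}(u)$ to be countable. Suppose not; then, since $\sum_{z} h(z)/2 \cdot |u(z)|$ taken over any countable subset of an uncountable support can be made unbounded --- more carefully, for each $n$ there are only finitely many $z$ with $h(z) \ge 1/n$ would be the wrong tack; instead, observe that an uncountable support means there is some $n$ and an infinite set $A \subseteq \mathrm{supp}(u)$ with $h(z) |u(z)| \ge 1/n$ for all $z \in A$ (by pigeonhole on the uncountably many positive values $h(z)|u(z)|$, which cannot all be small on an uncountable set only if... ) --- the clean route is: pick a sequence $(z_k)$ of distinct points in $\mathrm{supp}(u)$ and set $x := \sum_{k=1}^{N} \tfrac{1}{2} h(z_k)\,\mathrm{sign}(u(z_k))\, e_{z_k}$; this $x$ lies in $U_h$, is finitely supported, and $\langle x, u\rangle = \tfrac{1}{2}\sum_{k=1}^N h(z_k)|u(z_k)|$. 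If $\sum_{k} h(z_k)|u(z_k)| = \infty$ for some choice of distinct points in $\mathrm{supp}(u)$, this contradicts boundedness by $1$ for $N$ large. So $\sum_{z \in S} h(z)|u(z)| < \infty$ for every countable $S \subseteq \mathrm{supp}(u)$; in particular every countable subset of $\mathrm{supp}(u)$ on which $h \cdot |u|$ is bounded below by a fixed $1/n$ must be finite, whence $\{z : h(z)|u(z)| \ge 1/n\}$ is finite for each $n$, so $\mathrm{supp}(u) = \bigcup_n \{z : h(z)|u(z)| \ge 1/n\}$ is countable. Thus $u \in c_\omega(Z)$.

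The main obstacle is the bookkeeping in the ``only if'' direction: one must extract from a single basic $\zeta$-neighborhood $U_h$ the conclusion that $\mathrm{supp}(u)$ is countable, and the cleanest formulation is to show $\{z \in Z : h(z)|u(z)| \ge 1/n\}$ is finite for every $n \ge 1$ by testing the functional against finitely-supported vectors $x$ built from such a set (which automatically lie in $U_h$), using linearity and the bound $|\langle x, u\rangle| < 1$ to cap the cardinality. Everything else --- verifying $\zeta$ is a vector topology with the stated local base, and that the resulting dual pairing $(\mathbf{R}_0^Z, c_\omega(Z))$ is the one claimed --- is routine and parallels the treatment of $w$ and $\nu$ earlier in the paper.
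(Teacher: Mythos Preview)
Your proof is correct and follows essentially the same approach as the paper: for the \textsc{If} direction you build the same kind of summable weight function $h$ on the countable support, and for the \textsc{Only If} direction you test against finitely-supported vectors aligned with $\mathrm{sign}(u)$ and scaled by $h$ to force a contradiction with the boundedness on $U_h$. The only cosmetic difference is that you work directly with the product $h(z)|u(z)|$ and show its level sets $\{z: h(z)|u(z)|\ge 1/n\}$ are finite (a direct proof of countability), whereas the paper argues by contradiction, applying pigeonhole separately to $|u|$ and then to $h$ to locate an uncountable set where both are bounded below; the two arguments are interchangeable and yours is arguably tidier once you strip out the exploratory digression in the middle.
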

\begin{proof}
\textsc{If part.} Suppose that the support of $u$ is finite or countably infinite. In the first case, it is clear that the map $x\mapsto \langle x,u\rangle$ is $\zeta$-continuous. In the latter, assume without loss of generality that $Z=\mathbf{N}$, fix $\varepsilon>0$, and define 
$$
\forall z \in \mathbf{N}, \quad 
h(z):=\frac{\varepsilon}{1+2^z|u(z)|}.
$$
%for all $z \in \mathbf{N}$. 
It follows that 
$$
\forall x \in U_h, \quad 
|\langle x,u\rangle| \le \sum_{z=1}^\infty h(z)u(z)<\varepsilon,
$$
proving the claim. 

\medskip

\textsc{Only If part.} Suppose for the sake of contradiction that the support of $u$ is uncountable. 
Then there exists a rational $q_0>0$ such that $Z_0:=\{z \in Z: |u(z)|\ge q_0\}$ is uncountable. 
Since $u$ is $\zeta$-continuous, there exists $h \in (0,\infty)^Z$ such that $|\langle x,u\rangle| <1$ for all $x \in U_h$. 
With the same reasoning, there exists a rational $q_1>0$ such that $Z_1:=\{z \in Z_0: h(z)\ge q_1\}$ is uncountable. 
Hence there exists a countably infinite subset $\{z_n: n\ge 1\}\subseteq Z_1$. 
At this point, for each $n\ge 1$, define the sequence 
$$
x_n:=q_1\sum_{i=1}^n \mathrm{sgn}(u(z_i))\, e_{z_i},
$$
where $\mathrm{sgn}(t):=1$ if $t>0$, $\mathrm{sgn}(t):=-1$ if $t<0$, and $\mathrm{sgn}(0):=0$. It follows that, for each $n\ge 1$, $x_n \in U_h$ and, on the other hand, 
$$
\langle x_n,u\rangle=\sum_{z \in Z} x_n(z)u(z) 
=q_1\sum_{i=1}^n\sum_{z \in Z} |u(z)| e_{z_i}(z) \ge nq_0q_1.
$$
Hence, we reach a contradiction if $n$ is sufficiently large.
\end{proof}

At this point, consider the dual pair $(\mathbf{R}_0^Z, c_\omega(Z))$ and observe that $\zeta$ is consistent with the weak-topology $\sigma(\mathbf{R}_0^Z, c_\omega(Z))$. Since $\zeta$ is clearly locally solid, the \textsc{Only If} part follows putting together Theorem \ref{thm:closedconesdecisiontheory}, Theorem \ref{thm:a9a1}, together with Remark \ref{rmk:dualpairconstants} and Remark \ref{rmk:consistentlocallysolid}. 
Conversely, the \textsc{If} part is straighforward. 
\end{proof}

%%%%%%%%%%%%%%%%%%%%%%%%%%%%%%%%%

\section{Proof of Main Results}\label{sec:mainproofs}

\begin{proof}
[Proof of Theorem \ref{thm:mainweaktopology} and Theorem \ref{thm:mainweaktopologybounded}]
The \textsc{If} part is straightforward, and the \textsc{Only If} part follow by Corollary \ref{cor:necesuff} and Theorem \ref{thm:a11a1}, setting $W=\mathbf{R}^{Z}$ and $W=\ell_\infty^Z$, respectively. Lastly, the uniqueness part is consequence of the equivalence \ref{S6:item} $\Longleftrightarrow$ \ref{S8:item} in Theorem \ref{thm:closedconesdecisiontheory} applied on the dual pair $(\Sigma, W/\Theta)$, cf. Remark \ref{rmk:dualpairconstants}.
\end{proof}

\medskip

We proceed with a related result on monotone binary relations. 
For, let $Z$ be a nonempty countable set, and endow it with a binary relation $\trianglerighteq$. Also, let $W$ be a vector space with the (\textsc{L})-property, and consider the dual pair $(\mathbf{R}_0^Z, W)$. Hence:
\begin{enumerate}
\item [(i)] a binary relation $\succsim$ on $\Delta$ is $\trianglerighteq$-\emph{monotone} if, for all outcomes $a,b \in Z$, $a\trianglerighteq b$ implies $e_a\succsim e_b$. 
\item [(ii)] a utility function $u$ is $\trianglerighteq$-\emph{increasing} if, for all outcomes $a,b \in Z$, $a\trianglerighteq b$ implies $u(a) \ge u(b)$. 
\end{enumerate}
Then we have the following variant of our main results:
\begin{cor}\label{rmk:monotonicity}
Let $Z$ be a nonempty countable set and consider the dual pair $(\mathbf{R}_0^Z,W)$, where $W\subseteq \mathbf{R}^Z$ has the \textup{(}\textsc{L}\textup{)}-property. 
A binary relation $\succsim$ on $\Delta$ is a $\trianglerighteq$-monotone sequentially continuous preorder which satisfies the independence axiom if and only if there exists a nonempty set $U\subseteq W$ of $\trianglerighteq$-increasing utility functions $u: Z\to \mathbf{R}$ such that \eqref{eq:claimedmultiutility} holds for all $p,q \in \Delta$. 

In addition, if $V\subseteq W$ is another nonempty set of $\trianglerighteq$-increasing utility functions with the same property, then 
$\langle\,U\,\rangle=\langle\,V\,\rangle$.
%$\overline{\mathrm{co}}(\mathrm{cone}(U))=\overline{\mathrm{co}}(\mathrm{cone}(V))$. 
\end{cor}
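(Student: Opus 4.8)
The plan is to deduce Corollary~\ref{rmk:monotonicity} from Theorem~\ref{thm:mainweaktopology} (more precisely, from Corollary~\ref{cor:necesuff} together with Theorem~\ref{thm:a11a1}), by incorporating the monotonicity constraint into the cone $C$. The \textsc{If} part is immediate: if \eqref{eq:claimedmultiutility} holds for a set $U$ of $\trianglerighteq$-increasing utilities, then $\succsim$ is a preorder satisfying the independence axiom (as in Theorem~\ref{thm:mainweaktopology}), it is sequentially continuous since each $u \in U \subseteq W$ induces a $w$-continuous functional, and for $a \trianglerighteq b$ every $u \in U$ gives $\mathbf{E}_{e_a}[u] = u(a) \ge u(b) = \mathbf{E}_{e_b}[u]$, whence $e_a \succsim e_b$; thus $\succsim$ is $\trianglerighteq$-monotone.

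For the \textsc{Only If} part, I would start from a $\trianglerighteq$-monotone sequentially continuous preorder $\succsim$ satisfying the independence axiom and form the cone $C$ as in \eqref{eq:definitionC} with $X = \mathbf{R}_0^Z$. By Theorem~\ref{thm:a11a1} (via Theorem~\ref{thm:a10a11}, using that $Z$ is countable and $W$ has the (\textsc{L})-property), $C$ is a weak-closed convex cone, i.e.\ condition \ref{S1:item} holds, so by the equivalence \ref{S1:item} $\Longleftrightarrow$ \ref{S6:item} $\Longleftrightarrow$ \ref{S8:item} in Theorem~\ref{thm:closedconesdecisiontheory} (applied on the dual pair $(\Sigma, W/\Theta)$, cf.\ Remark~\ref{rmk:dualpairconstants}) there is a nonempty $U \subseteq W/\Theta$, hence a nonempty $U \subseteq W$, representing $\succsim$ as in \eqref{eq:claimedmultiutility} with $U = C^\prime$ (the dual cone in $W$). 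The key extra observation is that $\trianglerighteq$-monotonicity of $\succsim$ forces $e_a - e_b \in C$ whenever $a \trianglerighteq b$, and therefore every $u \in C^\prime$ satisfies $\langle e_a - e_b, u\rangle = u(a) - u(b) \ge 0$, i.e.\ $u$ is $\trianglerighteq$-increasing. So the canonical representing set $U := C^\prime$ already consists entirely of $\trianglerighteq$-increasing utility functions, giving the required representation.

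The uniqueness clause follows the same route as in Theorem~\ref{thm:mainweaktopology}: if $V \subseteq W$ is another set of ($\trianglerighteq$-increasing) utilities with \eqref{eq:claimedmultiutility}, then the equivalence \ref{S6:item} $\Longleftrightarrow$ \ref{S8:item} in Theorem~\ref{thm:closedconesdecisiontheory}, applied to $(\Sigma, W/\Theta)$, yields $\overline{\mathrm{co}}(\mathrm{cone}(U)) = C^{\prime} = \overline{\mathrm{co}}(\mathrm{cone}(V))$ in $W/\Theta$, which by the last observation in Remark~\ref{rmk:dualpairconstants} is exactly $\langle\, U\,\rangle = \langle\, V\,\rangle$ in $W$.

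I do not expect a genuine obstacle here; the only thing requiring a little care is that monotonicity is a condition on the \emph{extreme} lotteries $e_a, e_b$ rather than on arbitrary pairs, so one should double-check that $e_a - e_b \in \Sigma$ (clear, since $\sum_z (e_a - e_b)(z) = 0$) and that $e_a \succsim e_b$ indeed puts $e_a - e_b$ into $C$ as defined — which it does, taking $\lambda = 1$. Everything else is a direct citation of the machinery already assembled. The mildly delicate point is purely bookkeeping: translating between $U \subseteq W$ and $U \subseteq W/\Theta$ when passing through the dual pair $(\Sigma, W/\Theta)$, which is handled once and for all by Remark~\ref{rmk:dualpairconstants}.
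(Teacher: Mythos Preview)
Your proposal is correct and follows essentially the same route as the paper: reduce to the non-monotone case via Corollary~\ref{cor:necesuff} and Theorem~\ref{thm:a11a1}, then observe that $\trianglerighteq$-monotonicity puts the vectors $e_a-e_b$ (for $a\trianglerighteq b$) into $C$, forcing the representing utilities in $C^\prime$ to be $\trianglerighteq$-increasing. The only cosmetic difference is that the paper packages this last step by introducing the set $H:=\{e_a-e_b: a\trianglerighteq b\}$ and noting $U\subseteq \overline{\mathrm{co}}(\mathrm{cone}(U))=C^\prime\subseteq H^\prime$ for \emph{any} representing $U$, whereas you simply take $U:=C^\prime$ from the outset; both yield the existence claim, and the uniqueness argument is identical.
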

\begin{proof}
The proof goes on the same lines of the one of Theorem \ref{thm:mainweaktopology} and Theorem \ref{thm:mainweaktopologybounded} above.  
The only difference is that, in the \textsc{Only If} part, we need to show that each utility function $u \in U$ is $\trianglerighteq$-increasing. For, set
$$
H:=\{e_a-e_b: a,b \in Z \,\text{ and }\, a\trianglerighteq b\}.
$$
The hypothesis of $\trianglerighteq$-monotonicity of the binary relation $\succsim$ is equivalent to $H\subseteq C$, where $C$ is the cone defined in \eqref{eq:definitionC}. 
Since $C=U^\prime$,  
it follows by Theorem \ref{thm:closedconesdecisiontheory} that 
$$
U\subseteq \overline{\mathrm{co}}(\mathrm{cone}(U))=(\overline{\mathrm{co}}(\mathrm{cone}(U)))^{\prime\prime}=C^\prime \subseteq H^\prime. 
$$
To conclude, pick $y \in U$. Then $y \in H^\prime$, i.e.,  for each $e_a-e_b \in H$, we have $\langle e_a-e_b,y\rangle \ge 0$. Equivalently, 
for all $a,b \in Z$ with $a \trianglerighteq b$, we have $y(a) \ge y(b)$. Therefore $y$ is $\trianglerighteq$-increasing. 
\end{proof}

We continue with the proof of Theorem \ref{thm:mainnegativecharacterization}. 
As a side note, note that, if $\precsim$ provides a counterexample, then it cannot be an equivalence relation. This relies on the simple fact that every subspace of a vector space $X$ is $\sigma(X,X^\star)$-closed. Taking into account Corollary \ref{cor:necesuff} for $W=\mathbf{R}^Z$, the question seems to be related to the family of topological vector spaces $X$ such that every convex sequentially open subsets is open, which has been studied by Snipes in \cite{MR330993}. 
\begin{proof}
[Proof of Theorem \ref{thm:mainnegativecharacterization}]
Thanks to Corollary \ref{cor:necesuff}, it is sufficient to show that there exists a sequentially weak-closed convex cone $C\subseteq \Sigma$ which is not weak-closed. 
To this aim, let $\{Z_1,Z_2\}$ be a partition of $Z$ such that $|Z_1|=|Z_2|$. 
Let also $h: Z_1\to Z_2$ be a bijection and fix $a \in Z_1$. 
For each nonempty finite subset $B\subseteq Z_1$, define 
$$
\tilde{e}(B):=\frac{1}{|B|^2}\sum_{b \in B}(e_b-e_{h(b)}).
$$
Note that $\tilde{e}(B)$ belongs to $\Sigma$ for each finite nonempty subset of $Z_1$. Now, let $\mathscr{B}$ be the family of nonempty finite subsets of $Z_1\setminus \{a\}$ and $C$ be the sequential weak-closure of the convex cone $C_0$, where 
$$
C_0:=\mathrm{co}\left(\mathrm{cone}\left(\left\{\tilde{e}(\{a\})+\tilde{e}(B): B \in \mathscr{B}\right\}\right)\right).
$$
It is routine to check that $C$ is, indeed, a convex cone. To complete the proof, it will be enough to show that $\tilde{e}(\{a\}) \notin C$ and, on the other hand, $\tilde{e}(\{a\})$ belongs to the weak-closure of $C_0$, so that $C$ is a sequentially weak-closed convex cone in $\Sigma$ which is not weak-closed. 

\medskip

First, let us show that $\tilde{e}(\{a\}) \notin C$. For the sake of contradiction, suppose that, for each $n\ge 1$, there exist a positive integer $k_n$, positive scalars $\lambda_{n,1},\ldots,\lambda_{n,k_n} \in \mathbf{R}$, and 
nonempty finite pairwise disjoint
sets $B_{n,1},\ldots,B_{n,k_n}\in \mathscr{B}$ such that the sequence $(p_n)_{n\ge 1}$ is weak-convergent to $\tilde{e}(\{a\})$, where 
$$
p_n:=\sum_{i=1}^{k_n}\lambda_{n,i} \left(\tilde{e}(\{a\})+\tilde{e}(B_{n,i})\right).
$$
Set also $B_0:=\bigcup_{n\ge 1}\bigcup_{i=1}^{k_n}B_{n,i}$. 
Recalling that $\lim_n \langle p_n,u\rangle =\langle \tilde{e}(\{a\}),u\rangle$ for each $u \in \mathbf{R}^Z$ and choosing $u=e_a$, we obtain $\lim_n \sum_{i=1}^{k_n}\lambda_{n,i}=1$. First, suppose that $|B_0|<\infty$ and write each $p_n$ as $\sum_{\emptyset\neq B\subseteq B_0}\lambda_{n,B} \left(\tilde{e}(\{a\})+\tilde{e}(B)\right)$, where each $\lambda_{n,B}$ is possibly $0$ and at least one of them is nonzero. Letting $u$ be the characteristic function of $Z_1$, we obtain 
\begin{displaymath}
\begin{split}
1=\langle \tilde{e}(\{a\}), u\rangle
&=\lim_{n\to \infty}\langle \sum_{\emptyset \neq B\subseteq B_0}\lambda_{n,B}\left(\tilde{e}(\{a\})+\tilde{e}(B)\right),u\rangle \\ 
&=\lim_{n\to \infty} \sum_{\emptyset \neq B\subseteq B_0}\lambda_{n,B}\left(1+\frac{1}{|B|}\right).
\end{split}
\end{displaymath}
Since $\lim_n\sum_{\emptyset \neq B\subseteq B_0}\lambda_{n,B}=1$, we obtain that 
$$
\lim_{n\to \infty} \sum_{\emptyset \neq B\subseteq B_0}\frac{\lambda_{n,B}}{|B|}=0. 
$$
However, this is impossible since 
$$
\liminf_{n\to \infty} \sum_{\emptyset \neq B\subseteq B_0}\frac{\lambda_{n,B}}{|B|} \ge 
\liminf_{n\to \infty} \sum_{\emptyset \neq B\subseteq B_0}\frac{\lambda_{n,B}}{|B_0|}=\frac{1}{|B_0|}>0.
$$
This contradiction proves that $B_0$ is an infinite set. Hence, setting $z_n:=\max \bigcup_{i=1}^{k_n}B_{n,i}$ for each $n\ge 1$, we obtain that there exists a strictly increasing subsequence $(z_{n_m})_{m\ge 1}$ with the property that $z_{n_m}\notin \bigcup_{1\le t<n_m}\bigcup_{i=1}^{k_t}B_{t,i}$ for each $m\ge 1$. Finally, let $u_0 \in \mathbf{R}^Z$ be the utility function supported on $\{z_{n_t}: t\ge 1\}$ defined by 
$$
u_0(z_{n_m}):=m \cdot \frac{\max\{|B_{n_m,i}|^2: i \in [1,k_{n_m}]\}}{\min\{\lambda_{n_m,i}: i \in [1,k_{n_m}]\}}
$$
for all $m\ge 1$. It follows by construction that 
\begin{displaymath}
\begin{split}
\langle p_{n_m},u_0\rangle&\ge p_{n_m}(z_{n_m})u_0(z_{n_m})\\
&= u_0(z_{n_m}) \sum_{i=1}^{k_{n_m}}\lambda_{n_m,i} \left(\tilde{e}(\{a\})+\tilde{e}(B_{n_m,i})\right)(z_{n_m})\\
&\ge u_0(z_{n_m}) \sum_{i=1}^{k_{n_m}}\lambda_{n_m,i} \tilde{e}(B_{n_m,i})(z_{n_m})\\
&\ge u_0(z_{n_m}) \cdot \frac{\min\{\lambda_{n_m,i}: i \in [1,k_{n_m}]\}}{\max\{|B_{n_m,i}|^2: i \in [1,k_{n_m}]\}}=m,
\end{split}
\end{displaymath}
for all $m\ge 1$. This shows that the subsequence $\left(\langle p_{n_m}, u_0\rangle \right)_{m\ge 1}$ cannot be convergent to $\langle \tilde{e}(\{a\}), u_0\rangle$, contradicting the hypothesis that $(p_n)_{n\ge 1}$ is weak-convergent to $\tilde{e}(\{a\})$. Therefore $\tilde{e}(\{a\})\notin C$. 

\medskip

Lastly, let us show that $\tilde{e}(\{a\})$ belongs to the weak-closure $\overline{C_0}$ of $C_0$. To this aim, suppose for the sake of contradiction that $\tilde{e}(\{a\})\notin \overline{C_0}$. Thanks to the 
Strong Separating Hyperplane Theorem, 
see e.g. \cite[Theorem 8.17]{MR2317344}, there exists a linear functional $f: \mathbf{R}_0^Z \to \mathbf{R}$ such that 
$$
f(\tilde{e}(\{a\}))=-1
\quad \text{ and }\quad 
f(p) \ge 0 \text{ for all }p \in \overline{C_0}.
$$
Now, since $Z$ is uncountable, there exists a positive integer $k_0$ and an uncountable subset $\tilde{Z}_1\subseteq Z_1\setminus \{a\}$ such that 
\begin{equation}\label{eq:uncountable}
\forall b \in \tilde{Z}_1, \quad f(\tilde{e}(\{a\})+\tilde{e}(\{b\})) \le k_0,
\end{equation}
Note that, since $\tilde{e}(\{a\})+\tilde{e}(B)$ belongs to $C_0$ for each $B \in \mathscr{B}$, then $f(\tilde{e}(B)) \ge 
1$. 
To conclude, let $B$ be a finite subset of $\tilde{Z}_1$ such that $|B|=k_0+2$. It follows that 
\begin{displaymath}
\begin{split}
0&\le f(\tilde{e}(\{a\})+\tilde{e}(B))\\
&=f\left(\frac{1}{|B|}\sum_{b \in B}(\tilde{e}(\{a\})+\tilde{e}(\{b\}))+\tilde{e}(B)-\frac{1}{|B|}\sum_{b \in B}\tilde{e}(\{b\})\right)\\
&=\frac{1}{|B|}\sum_{b \in B}f\left(\tilde{e}(\{a\})+\tilde{e}(\{b\})\right)+\left(\frac{1}{|B|^2}-\frac{1}{|B|}\right)\sum_{b \in B}f\left(\tilde{e}(\{b\})\right)\\
&\le \frac{k_0}{|B|}+|B|\left(\frac{1}{|B|^2}-\frac{1}{|B|}\right)
<0.
\end{split}
\end{displaymath}
This contradiction proves that $\tilde{e}(\{a\})\in \overline{C_0}$, completing the proof. 
\end{proof}
Notice that the hypothesis that $Z$ is an uncountable set has been used in the above construction only to prove the existence of the infinite set $\tilde{Z}_1$ satisfying \eqref{eq:uncountable}. An analogue technique has been used by the author, in a different context, in the proof of \cite[Theorem 1.2]{MR3836186}. 

\medskip

\begin{proof}[Proof of Corollary \ref{cor:noncontradiction}] 
Suppose by contradiction that $\succsim^\prime$ is a $C(Z)$-sequentially continuous on $\mathscr{P}$ which satisfies the independence axiom and
$$
p \succsim q \text{ if and only if }p \succsim^\prime q
$$
for all lotteries $p,q \in \Delta$.  
%$\succsim^\prime \upharpoonright \Delta^2 =\,\, \succsim$. 
Thanks to \cite{Dubra_et_al}, $\succsim^\prime$ has a representation as in \eqref{eq:claimedmultiutility} by some set $U\subseteq C(Z)$. 
%At the same time, if $\succsim^\prime \upharpoonright \Delta^2 =\,\, \succsim$ then 
Thus $\succsim$ is $C(Z)$-sequentially continuous, hence also $w$-sequentially continuous. It follows that $\succsim$ has a representation as in \eqref{eq:claimedmultiutility} with the same set  $U$ (which is contained, of course, in $\mathbf{R}^Z$). This contradicts Theorem \ref{thm:mainnegativecharacterization}. 
\end{proof}

\medskip

\begin{proof}
[Proof of Theorem \ref{thm:firstrepresentation}]
The \textsc{If} part is straighforward, hence we show the \textsc{Only If} part. Choosing the weak-topology $w=\sigma(\mathbf{R}_0^Z, \mathbf{R}^Z)$, consider the duality between lotteries and utilities $\langle \mathbf{R}_0^Z, \mathbf{R}^Z\rangle$, with duality map given by 
$$
\forall p \in \mathbf{R}_0^Z, \forall u \in \mathbf{R}^Z \quad 
\langle p,u\rangle :=
\mathbf{E}_p[u],
%\sum\nolimits_{z \in Z}u(z)p(z).
$$
At this point, define the pointed cone $C$ as in \eqref{eq:definitionC}. 
Fix two lotteries $p,q \in \Delta$. Since $\succsim$ is a reflexive binary relation satisfying the independence axiom, it is well known and easy to check that $p\succsim q$ if and only if $p-q \in C$.  
Thanks to \cite[Theorem 2.2]{LeoPrin22}, there exists a nonempty family $\mathscr{U}\subseteq \mathcal{P}(\mathbf{R}^Z)$ of nonempty open sets such that $p-q \in C$ if and only if 
$$
\forall U \in \mathscr{U}, \exists u \in U, \quad 
\langle u,p-q\rangle \ge 0.
$$
The conclusion follows by the linearity of the duality map. 
\end{proof}

%%%%%%%%%%%%%%%%%%

\begin{rmk}\label{rmk:uniquenessgeneral}
As it follows by \cite[Theorem 2.5]{LeoPrin22}, if $\mathscr{V}\subseteq \mathcal{P}(\mathbf{R}^Z)$ is another family of nonempty sets satisfying the representation given in Theorem \ref{thm:firstrepresentation}, then 
$$
\{G \in \mathscr{G}: U\subseteq G \text{ for some }U \in \mathscr{U}\}
=\{G \in \mathscr{G}: V\subseteq G \text{ for some }V \in \mathscr{U}\}.
$$
Here, $\mathscr{G}$ stands for the family of nonempty sets $G$ of the type 
$$
\left\{u \in \mathbf{R}^Z: \sum\nolimits_{z \in Z}u(z)p(z)<0\right\},
$$
for some finitely-supported nonzero function $p \in \mathbf{R}_0^Z$. 
Note that each $G \in \mathscr{G}$ is a weak-open set. 
A related uniqueness result can be found in \cite[Appendix A.3]{MR3957335}. 
\end{rmk}
\begin{rmk}
Of course, since Theorem \ref{thm:firstrepresentation} is not related to any continuity axiom on the binary relation $\precsim$, one can choose also a different locally convex topology on $\mathbf{R}_0^Z$. For instance, choosing the weak-topology $\sigma(\mathbf{R}_0^Z, \mathbf{R}_0^Z)$, one obtains that there exists a nonempty family $\mathscr{U}\subseteq \mathcal{P}(\mathbf{R}_0^Z)$ of nonempty sets of finitely-supported utility functions such that 
\eqref{eq:firstcharacterizationtoogeneral} 
holds 
for all lotteries $p,q \in \Delta$. 
\end{rmk}

\section{Concluding Remarks}
The main message of this work is that, given a nonempty outcome set $Z$, every binary relation $\succsim$ on $\Delta$ is a $w$-sequentially continuous preorder satisfying the independence axiom can be represented as in \eqref{eq:claimedmultiutility} for all lotteries $p,q \in \Delta$, for some family $U$ of utility functions, if and only if $Z$ is countable. 

On the other hand, Theorem \ref{thm:mainintermediate} suggests that stronger continuity requirements on $\succsim$ may lead to characterizations which are independent of the cardinality of $Z$: is it true that, replacing $w$-sequential continuity with $w$-continuity, the analogue of Theorem \ref{thm:mainweaktopology} holds for every $Z$?

Lastly, it would be also interesting to understand how to compare different representation of the same preference over lotteries. More precisely, suppose that $W_1,W_2\subseteq \mathbf{R}^Z$ are vector spaces of utilities with the \textup{(}\textsc{L}\textup{)}-property, and let $\succsim$ be a preorder over $\Delta$ which satisfies the independence axiom. Is it true that $\succsim$ is both $W_1$[-sequentially] continuous and $W_2$[-sequentially] continuous if and only if it is $(W_1\cap W_2)$[-sequentially] continuous?

\subsection{Acknowledgments}

I am grateful to PRIN 2017 (grant 2017CY2NCA) for financial support, and to Fabio Maccheroni, Massimo Marinacci, Simone Cerreia--Vioglio (Universit\'{a} Bocconi), Giulio Principi (New York University), and Jochen Wengenroth (Trier University) for useful comments. %\textcolor{red}{I am also thanksful to the Associate Editor and three anonymous referees for many constructive comments which improved the exposition and of the motivation underlying the manuscript.}

%\nocite{*}
\bibliographystyle{amsplain}
%\bibliography{utility}

\end{document}